\documentclass[a4paper]{amsart}
\usepackage{amssymb}

\input xy
\xyoption{all}

\newcommand{\Id}{\mathrm{Id}}
\newcommand{\Comp}{\mathsf{Comp}}

\newcommand{\pr}{\mathrm{pr}}

\newcommand{\conv}{\mathrm{conv}}

\newcommand{\id}{\mathrm{id}}
\newcommand{\A}{\mathbb A}

\newcommand{\D}{\mathbb D}

\newcommand{\R}{\mathbb R}
\newcommand{\U}{\mathbb U}
\newcommand{\N}{\mathbb N}
\newcommand{\T}{\mathcal T}
\newcommand{\F}{\mathcal F}
\newcommand{\C}{\mathcal C}

\newcommand{\I}{\mathbb I}
\newcommand{\E}{\mathbb E}

\newtheorem{theorem}{Theorem}
\newtheorem{df}{Definition}

\newtheorem{problem}{Problem}
\newtheorem{proposition}{Proposition}

\begin{document}

\title{An isomorphism of idempotent monads }


\author{ Taras Radul}

\maketitle

Institute of Mathematics, Casimir the Great University of Bydgoszcz, Poland;
\newline
Department of Mechanics and Mathematics, Ivan Franko National University of Lviv,
Universytetska st., 1. 79000 Lviv, Ukraine.
\newline
e-mail: tarasradul@yahoo.co.uk

\textbf{Key words and phrases:}  Idempotent measure, monad,  convexity, fuzzy integral

\subjclass[MSC 2020]{18F60, 18C15, 28E10, 54B30, 52A01}

\begin{abstract} We consider isomorphism between the idempotent measure monad based on the maximum and the addition operations and the
idempotent measure monad based on the maximum and the multiplication  operations. A one of the consequences of this result is the
construction of a fuzzy integral based on the maximum and the addition operation.  We also investigate convexities related to these
monads.
 \end{abstract}

\maketitle

\section{Introduction} Idempotent mathematics has been a rapidly growing field in the last few decades. In traditional mathematics,
addition is a fundamental operation. However, in idempotent mathematics, a different operation, typically the maximum operation,
replaces addition.  Another classical multiplication operation was changed by different operations: addition, minimum, t-norm etc,
depending on the specific model needed for a given application. These changes have led to numerous applications in various fields,
including mathematics, mathematical physics, computer science, and economics. The versatility of idempotent mathematics is evident
from the wide range of problems it can address. For a comprehensive overview of this  field and its applications, readers may refer to
the survey article by Litvinov \cite{Litv} and the extensive bibliography provided therein.

Such fundamental  mathematical notion as convexity also found its idempotent analogues.  Max-plus convex sets were introduced in
\cite{Z}.
Max-min convexity was studied in \cite{NS} and \cite{NS1}. The B-convexity based on the operations of the maximum and the
multiplication was studied in \cite{BCh}.

Let us remark that many known abstract convexity structures have categorical nature and are  generated by monads \cite{R1}.
Topological and categorical properties of the functor of max-plus idempotent  measures  were studied in \cite{Zar}. In particular, the
idempotent measure monad was constructed. The monad of $\cdot$-measures (functionals which preserve the maximum and the multiplication
operations) was introduced in  \cite{R6}.  The convexity generated by this monads coincide with the convexities based on the maximum
and the multiplication  operations and were described in \cite{R2} in more general context.

We establish an isomorphism between the monad based on the maximum and addition operations and  the monad based on the maximum and
multiplication operations in this paper. Although such isomorphism is based on the simple fundamental fact that the exponent  function
transform addition into multiplication, it is technically difficult to build a direct isomorphism,  so we use correspondence between
measures and its densities. We also consider convexities generated by both monads which also are isomorphic in the last section of our
paper.

Composing the main result of this paper with the results from \cite{R6} we obtain that the the monad based on the maximum and addition
operations is isomorphic to the possibility capacity monad. We can consider such correspondence between measures as functions defined
on some family of sets and measures as functionals as a fuzzy integral related possibility capacities.   In fact, many applications of
capacities (non-additive measures) to game theory, decision making theory, economics etc deal not with measures as set functions  but
with integrals which allow to obtain expected utility or expected pay-off (see for example \cite{E}, \cite{BCh}, \cite{KZ}, \cite{EK}
\cite{Rad}, \cite{R3}, \cite{R4}).  Several types of integrals with respect to non-additive measures were developed for different
purposes (see for example books \cite{Grab} and  \cite{Den}). Such integrals are called fuzzy integrals. One of the important problems
of the fuzzy integrals theory is characterization of integrals as functionals on some function space (see for example subchapter 4.8
in \cite{Grab} devoted to characterizations of the Choquet integral and the Sugeno integral). Characterizations of t-normed integrals
were discussed in \cite{CLM}, \cite{Rad} and \cite{R5}. In fact these theorems we can consider as non-additive and non-linear
analogues of well-known  Riesz Theorem about a correspondence between the set of $\sigma$-additive regular Borel measures and the set
of linear  positively defined functionals. We introduce in this paper a fuzzy integral based on the maximum and the addition operation
and give it characterization.

\section{Idempotent measures and densities} In what follows, all spaces are assumed to be compacta (compact Hausdorff space) except
for $\R$ and the spaces of continuous functions on a compactum. All maps are assumed to be continuous.  We  denote by $C(X)$ the
Banach space of continuous functions on a compactum  $X$ endowed with the sup-norm.  We also consider the natural lattice operations
$\vee$ and $\wedge$ on $C(X)$ and  its sublattices $C(X,[0,+\infty))$ and $C(X,[0,1])$. For any $c\in\R$ we shall denote the
constant function on $X$ taking the value $c$ by $c_X$.

\begin{df}\cite{Zar} A functional $\mu: C(X) \to \R$ is called an idempotent probability  measure (a Maslov measure) if

\begin{enumerate}
\item $\mu(1_X)=1$;
\item $\mu(\lambda_X+\varphi)=\lambda+\mu(\varphi)$ for each $\lambda\in\R$ and $\varphi\in C(X)$;
\item $\mu(\psi\vee\varphi)=\mu(\psi)\vee\mu(\varphi)$ for each $\psi$, $\varphi\in C(X)$.
\end{enumerate}

\end{df}

Let $IX$ denote the set of all idempotent  measures on a compactum $X$. We consider topologically
$IX$ as a subspace of $\R^{C(X)}$.  The construction $I$ is  functorial what means that for each continuous map $f:X\to Y$ we can
consider a continuous map $If:IX\to IY$ defined as follows $If(\mu)(\psi)=\mu(\psi\circ f)$ for $\mu\in IX$ and $\psi\in C(Y)$.
Moreover $I$ preserves identities and composition. It is proved in \cite{Zar} that the functor $I$ preserves topological embedding.
For an embedding $i:A\to X$ we will identify the space $IA$ and the subspace $Ii(IA)\subset IX$.

By $\delta_{x}$ we denote the Dirac measure supported by the point $x\in X$.  It was shown in \cite{Zar} that the set $$I_\omega
X=\{\max_{i=1}^n(\lambda_i+\delta_{x_i})\mid\lambda_i\in\R_{\max},\ i\in\{1,\dots,n\},\ \max_{i=1}^n\lambda_i=0,\ x_i\in X,\
n\in\N\},$$ (i.e., the set of idempotent probability measures of finite support) is dense in $IX$.

By $\Comp$ we denote the category of compact Hausdorff
spaces (compacta) and continuous maps. We recall the notion  of monad (or triple) in the sense of S.Eilenberg and J.Moore \cite{EM}.
We define it only for the category $\Comp$.

A {\it monad} $\E=(E,\eta,\mu)$ in the category $\Comp$ consists of an endofunctor $E:{\Comp}\to{\Comp}$ and natural transformations
$\eta:\Id_{\Comp}\to E$ (unity), $\mu:E^2\to E$ (multiplication) satisfying the relations $$\mu\circ E\eta=\mu\circ\eta E=\text{\bf
1}_E$$ and $$\mu\circ\mu E=\mu\circ E\mu.$$ (By $\Id_{\Comp}$ we denote the identity functor on the category ${\Comp}$ and $E^2$ is
the superposition $E\circ E$ of $E$.)

The functor $I$ was completed to the monad $\I=(I,\eta,\mu)$  in \cite{Zar}. Let us describe the components of the natural
transformations $\eta$ and $\mu$. For a map $\phi\in C(X)$ we denote by $\pi_\phi$ or $\pi(\phi)$ the
corresponding projection $\pi_\phi:IX\to \R$. For a compactum $X$ we define components $\eta X$ and $\mu X$ of natural transformations
$\eta:\Id_{\Comp}\to I$, $\mu:I^2\to I$ by $\pi_\phi\circ \eta X=\phi$ and $\pi_\phi\circ \mu X=\pi(\pi_\phi)$ for all $\phi\in C(X)$.
It was proved in \cite{Zar} that the triple $\I=(I,\eta,\mu)$ forms a monad.

\begin{df}\cite{R6} A functional $\mu: C(X,[0,1]) \to [0,1]$ is called an $\cdot$ -measure  if

\begin{enumerate}
\item $\mu(1_X)=1$;
\item $\mu(\lambda_X\cdot\varphi)=\lambda\cdot\mu(\varphi)$ for each $\lambda\in[0,1]$ and $\varphi\in C(X,[0,1])$;
\item $\mu(\psi\vee\varphi)=\mu(\psi)\vee\mu(\varphi)$ for each $\psi$, $\varphi\in C(X)$.
\end{enumerate}

\end{df}

Let us denote that the term $\cdot$ -measure is not used in \cite{R6}. It is a particular case of the term $\ast$-measure from
\cite{Sukh} where $\ast$ denote any continuous t-norm.

We denote by $A^\cdot(X)$) the space of all $\cdot$ -measures considered as a subspace of  the space $[0,1]^{C(X,[0,1])}$ with the
product topology.
For a map $\phi\in C(X,[0,1])$ we denote by $\pi_\phi$ or $\pi(\phi)$ the
corresponding projection $\pi_\phi:A^\cdot X\to [0,1]$. For each continuous map $f:X\to Y$ between compacta $X$ and $Y$ we define the
map $A^\cdot f:A^\cdot X\to A^\cdot Y$ by the formula $\pi_\phi\circ A^\cdot f=\pi_{\phi\circ f}$ for $\phi\in C(Y,[0,1])$. It is easy
to check that the map $A^\cdot f$ is well defined and continuous. The construction $A^\cdot$ forms an endofunctor on $\Comp$ (see
\cite{Sukh} for more details).
For a compactum $X$ we define components $hX$ and $mX$ of natural transformations $h:\Id_{\Comp}\to A^\cdot$, $m:(A^\cdot)^2\to
A^\cdot$ by $\pi_\phi\circ hX=\phi$ and $\pi_\phi\circ m X=\pi(\pi_\phi)$ for all $\phi\in C(X,[0,1])$). It from the results of
\cite{R2} that the triple $\A^\cdot=(A^\cdot,h,m)$ is a monad on $\Comp$.  Let us remark that for each $x\in X$ the $\cdot$ -measure
$hX(x)$ is the Dirac measure concentrated at the point $x$ and we denote   $hX(x)=\delta_x$.

The main goal of this paper is to prove the isomorphism of the monads introduced above using the densities of idempotent measures.
The notion of density for an idempotent measure was introduced in \cite{A}. We will consider the correspondence between the measures
discussed here and their respective densities.

Let $\R_{\max}=\R\cup\{-\infty\}$ be the metric space endowed with the metric $\varrho$ defined by $\varrho(x, y) = |e^x-e^y|$.
 The convention $-\infty + x=-\infty$ and $-\infty \vee x=x$ allows us to extend the operations $\vee$ and $+$  over $\R_{\max}$. By
 $[-\infty,0]$ we denote the corresponding subset of $\R_{\max}$.

Let us recall that a map $f:X\to [-\infty,0]$ is called  upper semi continuous if $f^{-1}([-\infty,t])$ is open for each
$t\in[-\infty,0]$.

Let $X$ be a compactum. Put $$DX=\{f:X\to [-\infty,0]\mid f \text{ is upper semi continuous and }\max f = 0\}.$$

Let us remark that existence of max in the above notation follows from the upper semi continuity  of $f$.

Let $A\subset X$ and $t\in[-\infty,0]$. We denote $$S_-(A,t)=\{f\in DX\mid f(a)<t \text{ for each }a\in A\}$$  and $$S_+(A,t)=\{f\in
DX\mid \text{ there exists }a\in A \text{ such that } f(a)>t \}.$$  We consider $DX$ with the topology generated by a  subbase
$$\C=\{S_-(A,t)\mid A \text{ is a closed subset of } X \text{ and }t\in[-\infty,0]\}\cup$$
$$\cup\{S_+(U,t)\mid U \text{ is an open subset of } X \text{ and }t\in[-\infty,0]\}.$$

Define  maps $nX:DX\to IX$ and $sX:IX\to DX$ as follows $$nX(f)(\varphi) = \max\{f(x)+\varphi(x) | x \in X\}$$ for $f\in DX$,
$\varphi\in C(X)$ and $$sX(\mu)(x)=\inf\{\mu(\varphi)|\varphi\in C(X,(-\infty,0])\text{ such that  }\varphi(x)=0\}$$ for $\mu\in IX$,
$x\in X$.

\begin{proposition}\label{maps} The maps $nX$ and $sX$ are well-defined and continuous. Moreover, we have $nX\circ sX=\id_{IX}$ and
$sX\circ nX=\id_{DX}$.
\end{proposition}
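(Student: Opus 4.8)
The plan is to verify the four assertions separately, in the order: well-definedness and continuity of $nX$, well-definedness and continuity of $sX$, the identity $nX\circ sX=\id_{IX}$, and finally $sX\circ nX=\id_{DX}$.

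\textbf{Well-definedness and continuity of $nX$.}
First I would check that for $f\in DX$ the functional $nX(f)$ defined by $nX(f)(\varphi)=\max\{f(x)+\varphi(x)\mid x\in X\}$ is a genuine idempotent measure. The maximum exists because $f$ is upper semi continuous and $\varphi$ is continuous, so $f+\varphi$ is upper semi continuous on a compactum. Condition (1) follows since $\max\{f(x)+1\mid x\in X\}=1+\max f=1$ using $\max f=0$; condition (2) is the identity $\max\{f(x)+\lambda+\varphi(x)\}=\lambda+\max\{f(x)+\varphi(x)\}$; condition (3) follows from $\max\{f(x)+(\psi\vee\varphi)(x)\}=\max(\max\{f+\psi\},\max\{f+\varphi\})$, which is a routine pointwise argument. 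For continuity of $nX$ I would show each coordinate $\pi_\varphi\circ nX:DX\to\R$ is continuous; this amounts to showing that $f\mapsto\max\{f(x)+\varphi(x)\mid x\in X\}$ is continuous in the subbasic topology on $DX$, which reduces to checking that the preimages of $(-\infty,c)$ and $(c,+\infty)$ are open, and these translate directly into unions/intersections of the subbasic sets $S_-(A,t)$ and $S_+(U,t)$ (using compactness to pass from closed subsets to a finite subcover).

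\textbf{Well-definedness and continuity of $sX$.}
For $sX$ I would first note that the set of $\varphi\in C(X,(-\infty,0])$ with $\varphi(x)=0$ is nonempty (it contains $0_X$), so the infimum is taken over a nonempty set; since $\mu$ is monotone (which follows from (3): $\psi\le\varphi$ implies $\mu(\varphi)=\mu(\psi\vee\varphi)=\mu(\psi)\vee\mu(\varphi)$) and $\mu(0_X)=0$ by (1) and (2), the infimum lies in $[-\infty,0]$. I would then verify $\max_{x} sX(\mu)(x)=0$: the inequality $\le 0$ is immediate, and equality is obtained by a compactness argument showing that if $sX(\mu)(x)<0$ for every $x$ then one can glue finitely many of the witnessing functions $\varphi$ into a single $\varphi_0\in C(X,(-\infty,0])$ with $\mu(\varphi_0)<0$, contradicting $\mu(\varphi_0)\ge\mu(0_X)=0$ — wait, rather the correct bound, using that such a glued function would force $\mu$ below $0$ while $\mu$ of any function bounded above by $0$ and not everywhere $\le$ some negative constant... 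I would organize this via the subbasic open sets. Upper semi continuity of $sX(\mu)$ means $\{x\mid sX(\mu)(x)<t\}$ is open, which holds because $sX(\mu)(x)<t$ exactly when some admissible $\varphi$ has $\mu(\varphi)<t$, and then $\varphi<t$... more precisely the set where a fixed such $\varphi$ certifies membership is open by continuity of $\varphi$. Continuity of $sX$ again reduces to checking preimages of the subbasic sets $\C$ are open in $IX\subset\R^{C(X)}$, i.e. depend on finitely many coordinates; this is where the infimum definition and a compactness argument are combined.

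\textbf{The two composition identities.}
For $nX\circ sX=\id_{IX}$ I would show that for $\mu\in IX$ and $\varphi\in C(X)$ one has $\max\{sX(\mu)(x)+\varphi(x)\mid x\in X\}=\mu(\varphi)$; the inequality $\le$ uses, for each $x$, an admissible $\psi$ with $\mu(\psi)$ close to $sX(\mu)(x)$ and the estimate $\varphi\le\varphi(x)\cdot 1_X+((\varphi-\varphi(x)_X)\wedge 0)$ type bookkeeping together with properties (2),(3); the inequality $\ge$ follows from approximating $\mu$ by finitely supported measures $\max_i(\lambda_i+\delta_{x_i})$ (density of $I_\omega X$) and observing that $sX$ recovers $\lambda_i$ at $x_i$. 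For $sX\circ nX=\id_{DX}$ I would fix $f\in DX$ and $x\in X$ and show $\inf\{nX(f)(\varphi)\mid\varphi\in C(X,(-\infty,0]),\varphi(x)=0\}=f(x)$: the value $\le$ uses for each $\e>0$ an upper semi continuous-to-continuous approximation, choosing $\varphi$ that is $0$ at $x$ and very negative off a small neighborhood so that $\max\{f+\varphi\}$ is close to $f(x)$ (possible since $f$ is u.s.c. and $\max f=0$ keeps things bounded); the value $\ge$ follows because $nX(f)(\varphi)\ge f(x)+\varphi(x)=f(x)$ for every admissible $\varphi$.

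\textbf{Main obstacle.}
The hard part will be the inequality ``$\le$'' in $sX\circ nX=\id_{DX}$ — that is, producing, for a given $f\in DX$ and $\e>0$, a continuous $\varphi\le 0$ vanishing at $x$ with $\max\{f(y)+\varphi(y)\mid y\in X\}<f(x)+\e$. Upper semi continuity of $f$ gives an open neighborhood $U$ of $x$ on which $f<f(x)+\e$, but on $X\setminus U$ one only knows $f\le 0$, so $\varphi$ must be pushed down below $f(x)-0=f(x)$ there while staying continuous and $0$ at $x$; this is handled by Urysohn's lemma but requires care when $f(x)=-\infty$, where the construction must be iterated over a sequence of levels. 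The analogous compactness/gluing step in establishing $\max sX(\mu)=0$ and the continuity of $sX$ is of the same flavor and is the second most delicate point; everything else is bookkeeping with the defining properties (1)--(3) and the density of $I_\omega X$.
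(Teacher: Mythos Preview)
Your plan is correct and follows essentially the same route as the paper: a direct verification of $sX\circ nX=\id_{DX}$ and a density-of-$I_\omega X$ argument for $nX\circ sX=\id_{IX}$. In fact you are considerably more thorough than the paper, whose proof omits well-definedness and continuity altogether and simply asserts the equality $\inf\{\max_y(f(y)+\varphi(y))\mid \varphi\le 0,\ \varphi(x)=0\}=f(x)$ without justification; you are right that this step (your ``$\le$'' via a Urysohn-type $\varphi$, with extra care when $f(x)=-\infty$) is where the real work lies.
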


\begin{proof} Consider any $f\in DX$ and $x\in X$. Then we have $$sX\circ nX(f)(x)=\inf\{nX(f)(\varphi)|\varphi\in C(X,(-\infty,0])\text{ such that
}\varphi(x)=0\}=$$ $$=\inf\{\max\{f(y)+\varphi(y) | y \in X\}|\varphi\in C(X,(-\infty,0])\text{ such that  }\varphi(x)=0\}=f(y).$$
Hence $sX\circ nX=\id_{DX}$.

Since $I_\omega X$   is dense in $IX$, it is enough to check the equality $nX\circ sX=\id_{IX}$ for
$\nu=\max_{i=1}^n(\lambda_i+\delta_{x_i})$. Consider any $\varphi\in C(X)$. Then $\nu(\varphi)=\max_{i=1}^n(\lambda_i+\varphi(x_i))$.
We have $$nX\circ sX(\nu)(\varphi)=\max\{sX(\nu)(x)+\varphi(x) | x \in X\}=$$ $$=\max\{\varphi(x)+\inf\{\nu(\varphi)|\varphi\in
C(X,(-\infty,0])\text{ such that  }\varphi(x)=0\} | x \in X\}=$$ $$=\max_{i=1}^n(\lambda_i+\varphi(x_i))=\nu(\varphi).$$
\end{proof}

Let us remark that the last proposition implies compactness of $DX$.

Analogously we introduce the space of densities of $\cdot$ -measures.   Let $X$ be a compactum. Put $$D_1 X=\{f:X\to [0,1]\mid f
\text{ is upper semicontinuous and }\max f = 1\}.$$
Let $\mu\in A^\cdot(X)$. We denote $$S_-(B,t)=\{f\in DX\mid f(a)<t \text{ for each }a\in A\}$$ where $B$ is a closed subset of $X$,
$t\in[0,1]$ and $$S_+(U,t)=\{f\in DX\mid \text{ there exists }a\in U \text{ such that } f(a)>t \}$$ where $U$ is an open subset of
$X$, $t\in[0,1]$. We consider $D_1X$ with the topology generated by a  subbase
$$\C=\{S_-(B,t)\mid B \text{ is a closed subset of } X \text{ and }t\in[0,1]\}\cup$$
$$\cup\{S_+(U,t)\mid U \text{ is an open subset of } X \text{ and }t\in[0,1]\}.$$

Define  maps $n_1X:D_1X\to A^\cdot(X)$ and $s_1X:A^\cdot(X)\to D_1X$ as follows $$nX(f)(\varphi) = \max\{f(x)\cdot\varphi(x) | x \in
X\}$$ for $f\in DX$, $\varphi\in C(X,[0,1])$ and $$s_1X(\mu)(x)=\inf\{\mu(\varphi)|\varphi\in C(X,[0,1])\text{ such that
}\varphi(x)=1\}$$ for $\mu\in A^\cdot(X)$, $x\in X$.

The proof of the following proposition is analogous to the proof of Proposition  \ref{maps}.

\begin{proposition}\label{maps1} The maps $n_1X$ and $s_1X$ are well-defined and continuous. Moreover, we have $n_1X\circ
s_1X=\id_{A^\cdot(X)}$ and $s_1X\circ n_1X=\id_{D_1X}$.
\end{proposition}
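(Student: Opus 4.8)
The plan is to imitate the proof of Proposition~\ref{maps}, reading it through the dictionary $(\R_{\max},+,-\infty,0)\leftrightarrow([0,1],\cdot,0,1)$; concretely, each step of the additive argument transfers to the multiplicative one after applying the exponential $t\mapsto e^t$. Before the two composition identities one must record well-definedness and continuity. That $n_1X(f)$ is a $\cdot$-measure is immediate: $n_1X(f)(1_X)=\max f=1$, and since $f\ge 0$, multiplication by $f$ commutes both with multiplication by a scalar $\lambda\in[0,1]$ and with $\vee$, so axioms (2) and (3) follow upon taking the maximum over $x$ (a genuine maximum, since $f\varphi$ is upper semicontinuous on the compactum $X$). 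That $s_1X(\mu)\in D_1X$ is the only point that really uses the axioms: $s_1X(\mu)\le\mu(1_X)=1$; upper semicontinuity is seen by rescaling a witness $\varphi$ (with $\varphi(x)=1$, $\mu(\varphi)<t$) to $\min(1,\varphi/\varphi(x'))$ at a nearby point $x'$, which witnesses $s_1X(\mu)(x')<t$ once $\varphi(x')$ is close enough to $1$ (using axiom (2) and monotonicity of $\mu$, itself immediate from axiom (3)); and $\max s_1X(\mu)=1$ because if $s_1X(\mu)<1-\delta$ on all of $X$, then a finite maximum of witnesses would dominate $(1-\delta)_X$ while having $\mu$-value $<1-\delta$, contradicting monotonicity and axiom (2). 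Continuity of $n_1X$ and $s_1X$ is then the same bookkeeping with the subbase $\C$ as for $nX$ and $sX$: preimages of $S_-(B,t)$ and $S_+(U,t)$ are unions of sets $\{\mu:\mu(\varphi)<t\}$ or $\{\mu:\mu(\varphi)>t\}$, and conversely.

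Next I would prove $s_1X\circ n_1X=\id_{D_1X}$. Fix $f\in D_1X$ and $x\in X$. Every admissible $\varphi$ has $\varphi(x)=1$, so $n_1X(f)(\varphi)\ge f(x)$ and hence $s_1X(n_1X(f))(x)\ge f(x)$. For the reverse inequality fix $\e>0$; by upper semicontinuity the set $U=\{y\in X:f(y)<f(x)+\e\}$ is open and contains $x$. If $U=X$ take $\varphi=1_X$; otherwise Urysohn's lemma gives $\varphi\in C(X,[0,1])$ with $\varphi(x)=1$ and $\varphi\equiv 0$ on $X\setminus U$, so that $f(y)\varphi(y)\le f(y)<f(x)+\e$ for $y\in U$ and $f(y)\varphi(y)=0\le f(x)+\e$ for $y\notin U$, whence $n_1X(f)(\varphi)\le f(x)+\e$. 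Letting $\e\to 0$ gives $s_1X(n_1X(f))(x)\le f(x)$.

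For $n_1X\circ s_1X=\id_{A^\cdot(X)}$, observe that both sides are continuous maps into the Hausdorff space $A^\cdot(X)$, so it is enough to check the equality on a dense subset. The natural one is the set of finitely supported $\cdot$-measures $\nu=\max_{i=1}^n\lambda_i\delta_{x_i}$ with $\lambda_i\in[0,1]$ and $\max_i\lambda_i=1$ --- the analogue of $I_\omega X$, whose density in $A^\cdot(X)$ is available from the theory of $\cdot$-measures (and provable exactly as for $I_\omega X$). Taking the $x_i$ distinct, a Urysohn argument shows that $s_1X(\nu)$ is the density equal to $\lambda_i$ at $x_i$ and to $0$ off $\{x_1,\dots,x_n\}$, and substituting this back yields $n_1X(s_1X(\nu))(\varphi)=\max_i\lambda_i\varphi(x_i)=\nu(\varphi)$ for all $\varphi\in C(X,[0,1])$, i.e.\ $n_1X(s_1X(\nu))=\nu$.

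The only real obstacle I foresee is the membership $s_1X(\mu)\in D_1X$ --- namely its upper semicontinuity and the attainment of the maximal value $1$ --- since this is precisely where the defining properties of a $\cdot$-measure (normalization, the scaling axiom, monotonicity) must genuinely be invoked; everything else is a faithful transcription of the argument for the monad $\I$, once one has at hand the density of the finitely supported $\cdot$-measures in $A^\cdot(X)$.
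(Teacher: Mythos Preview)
Your proposal is correct and follows exactly the route the paper takes: the paper's entire proof of Proposition~\ref{maps1} is the sentence ``analogous to the proof of Proposition~\ref{maps}'', and your argument is precisely that analogy carried out in detail --- the direct Urysohn computation for $s_1X\circ n_1X$ and the check of $n_1X\circ s_1X$ on the dense set of finitely supported $\cdot$-measures. You have in fact supplied more than the paper does, since the paper's proof of Proposition~\ref{maps} itself leaves well-definedness, continuity, and the Urysohn step implicit.
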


The last proposition implies compactness of $D_1X$.

\section{Categorical aspects of correspondence between idempotent measures and its densities}

We will show that introduced homeomorphism between spaces $IX$ and $DX$ has categorical character. We introduce in this section a
monad structure on the construction $DX$ and show that this monad is isomorphic to the idempotent measure monad. For technical reasons
we firstly introduce  for the construction a weaker structure of quasimonad.  We will need some general  notions concerning semimonads
and its isomorphisms.

A {\it quasimonad} $\E=(E,\eta,\mu)$ in the category $\Comp$ consists of an endofunctor $E:{\Comp}\to{\Comp}$ and natural
transformations $\eta:\Id_{\Comp}\to E$ (unity), $\mu:E^2\to E$ (multiplication) satisfying the relations $$\mu\circ
E\eta=\mu\circ\eta E=\text{\bf 1}_E.$$
The notion of quasimonad  was introduced in \cite{FF} under name semimonad. But the term semimonad was used in many another sources as
monad without unity, so, we prefer use the term quasimonad to denote monad without associativity of multiplication.

A natural transformation  $\psi:E\to E'$ of functors $E$ and $E'$ is called a {\it morphism}
from a (quasi)monad $\E=(E,\eta,\mu)$ into a (quasi)monad $\E'=(E',\eta',\mu')$
if $\psi\circ\eta= \eta'$ and $\psi\circ\mu=\mu'\circ\psi E'\circ
E\psi$. A (quasi)monad morphism $\psi:E\to E'$ is called an isomorphism if it has an inverse morphism of (quasi)monads. It is easy to
check that in $\Comp$ a (quasi)monad morphism $\psi$ is an isomorphism   if each its  component $\psi X$ is a homeomorphism.

\begin{proposition}\label{monad} Let $\psi:\E\to \E'$ be an isomorphism of quasimonads $\E=(E,\eta,\mu)$ and  $\E'=(E',\eta',\mu')$.
If $\E'$ is a monad then $\E$ is a monad too.
\end{proposition}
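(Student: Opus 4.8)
The plan is to use the standard principle that an equationally defined structure is preserved under isomorphism: since $\E$ is already a quasimonad, the unit laws $\mu\circ E\eta=\mu\circ\eta E=\mathbf 1_E$ hold by hypothesis, so the only thing left to prove is the associativity law $\mu\circ\mu E=\mu\circ E\mu$ for $\E$. Being an isomorphism of quasimonads, $\psi$ has an inverse morphism, so each component $\psi X$ is a homeomorphism (with inverse $\psi^{-1}X$), in particular a monomorphism in $\Comp$; hence it suffices to establish the weaker-looking equality $\psi\circ\mu\circ\mu E=\psi\circ\mu\circ E\mu$ of natural transformations $E^3\to E'$, and then cancel $\psi$ on the left.

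The core is a diagram chase from the morphism relation $\psi\circ\mu=\mu'\circ\psi E'\circ E\psi$. The auxiliary identities needed are: naturality of $\mu$ evaluated at the maps $\psi X\colon EX\to E'X$, which gives $E\psi\circ\mu E=\mu E'\circ EE\psi$ (and its companion $\psi E\circ\mu E=\cdots$); naturality of $\psi$ evaluated at the maps $\mu'X$, which gives $\psi E'\circ E\mu'=E'\mu'\circ\psi E'E'$ (and its companion); and the whiskerings of the morphism relation by $E$ and by $E'$, which reproduce further copies of that relation. Starting from $\psi\circ\mu\circ\mu E$, one applies the morphism relation and then pushes every copy of $\psi$ "inward" past $\mu$ and $\mu'$ using these moves, arriving at an expression $\mu'\circ\mu'E'\circ\Psi$, where $\Psi\colon E^3\to (E')^3$ is the threefold horizontal composite of $\psi$ with itself (a staircase composite such as $\psi E'E'\circ E\psi E'\circ EE\psi$; by repeated use of the interchange law all such composites define the same $\Psi$). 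Running the mirror-image computation on $\psi\circ\mu\circ E\mu$ yields $\mu'\circ E'\mu'\circ\Psi$ with the same $\Psi$.

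Finally, since $\E'$ is a monad, its associativity law $\mu'\circ\mu'E'=\mu'\circ E'\mu'$ holds, so precomposing with $\Psi$ gives $\psi\circ\mu\circ\mu E=\psi\circ\mu\circ E\mu$; cancelling the monomorphisms $\psi X$ produces $\mu\circ\mu E=\mu\circ E\mu$, so $\E$ is a monad. I expect the only real obstacle to be the bookkeeping in the second paragraph: one must track the several whiskerings of $\psi$ (namely $\psi E$, $E\psi$, $\psi E'$, $E'\psi$ and their iterates) and apply the interchange law carefully enough that the two sides literally yield the same $\Psi$ before associativity of $\mu'$ is invoked. No topology enters beyond the fact that homeomorphisms are monic.
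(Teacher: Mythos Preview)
Your proposal is correct and follows essentially the same diagram chase as the paper: both arguments push the components of $\psi$ inward past $\mu$ and $\mu'$ using the morphism relation together with naturality, reduce to the associativity law for $\mu'$, and conclude. The only cosmetic difference is that the paper carries the explicit inverse $\varphi$ on the left from the start (writing $\mu=\varphi\circ\mu'\circ\psi E'\circ E\psi$) and peels it off at the end, whereas you postcompose with $\psi$ and cancel it as a monomorphism; these are equivalent formulations of the same computation.
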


\begin{proof} We have to prove the equality $\mu\circ\mu E=\mu\circ E\mu.$ Let $\varphi:\E'\to \E$ be the morphism inverse to $\psi$.
The equalities $\mu'\circ\mu' E=\mu'\circ E\mu'$, $\psi\circ\mu=\mu'\circ\psi E'\circ E\psi$ and naturality of all transformations
imply the following equalities for any compactum $X$.
$$\mu X\circ E(\mu X)=\varphi X\circ\mu' X\circ\psi E'X\circ E(\psi X)\circ E(\mu X)=$$
$$=\varphi X\circ\mu' X\circ\psi E'X\circ E(\mu' X)\circ E(\psi E'X)\circ E^2(\psi X)=$$
$$=\varphi X\circ\mu' X\circ E'(\mu' X)\circ\psi E'^2X\circ  E(\psi E'X)\circ E^2(\psi X)=$$
$$=\varphi X\circ\mu' X\circ \mu' E'X\circ\psi E'^2X\circ  E(E'(\psi X))\circ  E(\psi EX) =$$
$$=\varphi X\circ\mu' X\circ \mu' E'X\circ E'^2(\psi X)\circ\psi  E'(E( X))\circ  E(\psi EX) =$$
$$=\varphi X\circ\mu' X\circ  E'(\psi X)\circ \mu'EX\circ\psi  E'(E( X))\circ  E(\psi EX) =$$
$$=\varphi X\circ\mu' X\circ  E'(\psi X)\circ \psi  EX\circ  \mu EX =\mu X\circ \mu EX.$$
\end{proof}

For each continuous map $g:X\to Y$ between compacta $X$ and $Y$ we define the map $D g:D X\to D Y$ by the formula $D g(f)(y)=max
f(g^{-1}(y))$ for $f\in DX$ and $y\in Y$ (we put $max \emptyset=-\infty$).

\begin{proposition}\label{functor} The map $D g$ is well-defined and continuous. Moreover, we have $D(\id_X)=\id_{DX}$ and $D(g\circ
h)= D(g)\circ D(h)$.
\end{proposition}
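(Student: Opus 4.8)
The plan is to verify, in this order, that $Dg(f)\in DY$ for every $f\in DX$, that $Dg$ is continuous, and that $D$ preserves identities and compositions; only the first of these requires a genuine argument — the upper semicontinuity of $Dg(f)$ — while the others are direct manipulations of the definitions (with the usual care for empty fibres and for the value $-\infty$).

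First I would observe that for each $y\in Y$ the fibre $g^{-1}(y)$ is closed in the compactum $X$, hence compact, and $f|_{g^{-1}(y)}$ is again upper semicontinuous, so $Dg(f)(y)=\max f(g^{-1}(y))$ is an attained maximum lying in $[-\infty,0]$ when $g^{-1}(y)\neq\emptyset$, and equals $-\infty$ otherwise. To see that $Dg(f)$ is upper semicontinuous, fix $t\in(-\infty,0]$ (the value $t=-\infty$ being trivial, since then $\{y\mid Dg(f)(y)<t\}=\emptyset$) and check the identity
$$\{y\in Y\mid Dg(f)(y)\geq t\}=g\big(\{x\in X\mid f(x)\geq t\}\big);$$
the inclusion $\supseteq$ is immediate from the definition of $Dg$, and $\subseteq$ holds because the maximum defining $Dg(f)(y)$ is attained at some point of $g^{-1}(y)$. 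Since $f$ is upper semicontinuous, $\{x\in X\mid f(x)\geq t\}$ is closed, hence compact, so its image under the continuous map $g$ is compact and therefore closed in $Y$; thus $\{y\in Y\mid Dg(f)(y)<t\}$ is open, which is the required semicontinuity. Finally, choosing $x_0\in X$ with $f(x_0)=\max f=0$ gives $0=f(x_0)\leq Dg(f)(g(x_0))\leq 0$, so $\max Dg(f)=0$ and indeed $Dg(f)\in DY$.

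Next I would prove continuity of $Dg\colon DX\to DY$ by checking that the preimage under $Dg$ of every subbasic open subset of $DY$ is open in $DX$. Unwinding the definitions, for a closed set $A\subseteq Y$ and an open set $U\subseteq Y$ one obtains
$$(Dg)^{-1}\big(S_-(A,t)\big)=S_-\big(g^{-1}(A),t\big),\qquad (Dg)^{-1}\big(S_+(U,t)\big)=S_+\big(g^{-1}(U),t\big),$$
using once more that $Dg(f)(y)=\max f(g^{-1}(y))$. Since $g$ is continuous, $g^{-1}(A)$ is closed and $g^{-1}(U)$ is open in $X$, so both right-hand sides lie in the subbase $\C$ generating the topology of $DX$; hence $Dg$ is continuous.

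For the functorial identities, $D(\id_X)(f)(x)=\max f(\{x\})=f(x)$ yields $D(\id_X)=\id_{DX}$. For composable maps $h$ and $g$ and $f\in DX$ we have
$$\big(D(g)\circ D(h)\big)(f)(y)=\max_{z\in g^{-1}(y)}\ \max_{x\in h^{-1}(z)}f(x)=\max_{x\in(g\circ h)^{-1}(y)}f(x)=D(g\circ h)(f)(y),$$
the middle step being the equality $(g\circ h)^{-1}(y)=\bigcup_{z\in g^{-1}(y)}h^{-1}(z)$ together with the fact that a maximum over a union equals the maximum of the maxima over its members (the convention $\max\emptyset=-\infty$ making the degenerate cases consistent); hence $D(g\circ h)=D(g)\circ D(h)$. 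The only real obstacle is the upper semicontinuity of $Dg(f)$, and it reduces to two standard facts: superlevel sets of an upper semicontinuous function on a compactum are closed, and continuous images of compacta are closed.
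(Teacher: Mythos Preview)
The paper states this proposition without proof, so there is no argument to compare against; your proof correctly fills in the omitted verification. One tiny caveat: the displayed identity $(Dg)^{-1}(S_-(A,t))=S_-(g^{-1}(A),t)$ fails in the degenerate case $t=-\infty$, $A\neq\emptyset$, $g^{-1}(A)=\emptyset$ (the left side is $\emptyset$ while the right side is $DX$), but since the preimage is still open this does not affect the continuity conclusion.
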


The last proposition implies that  $D$ forms an endofunctor on $\Comp$.
For a compactum $X$ we define components $\varepsilon X$ and $\kappa X$ of natural transformations $\varepsilon:\Id_{\Comp}\to D$,
$\kappa:D^2\to D$ by $$
\varepsilon X(x)(y)=\begin{cases}
0,&x=y,\\
-\infty,&x\neq y\end{cases}
$$
for $y$, $x\in X$ and
$$\kappa X(F)(x)=\max\{f(x)+F(f)\mid f\in DX\}$$ for  $F\in D^2 X$ and $x\in X$.

\begin{proposition}\label{quasimonad} The triple $\D=(D,\varepsilon,\kappa)$ is a quasimonad on $\Comp$.
\end{proposition}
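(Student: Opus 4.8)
The plan is to verify the three defining conditions of a quasimonad for $\D=(D,\varepsilon,\kappa)$: that $\varepsilon$ and $\kappa$ are natural transformations, and that the unit laws $\kappa\circ D\varepsilon=\mathbf 1_D=\kappa\circ\varepsilon D$ hold. I would not attempt these by a bare-handed computation with upper semicontinuous functions; instead I would transport everything through the homeomorphisms $nX:DX\to IX$ and $sX:IX\to DX$ of Proposition~\ref{maps}, using the fact that $\I=(I,\eta,\mu)$ is already known to be a monad (hence in particular a quasimonad). The key claim to establish is that the family $n=\{nX\}$ is a \emph{natural isomorphism} of functors $D\to I$ which carries $\varepsilon$ to $\eta$ and $\kappa$ to $\mu$ in the appropriate sense; once this is done, the quasimonad identities for $\D$ follow formally from those for $\I$, exactly as in the computation in Proposition~\ref{monad}.

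So the first step is naturality of $n$: for a map $g:X\to Y$ one checks $nY\circ Dg=Ig\circ nX$ on a generic $f\in DX$ and $\psi\in C(Y)$, which amounts to the identity $\max\{Dg(f)(y)+\psi(y)\mid y\in Y\}=\max\{f(x)+\psi(g(x))\mid x\in X\}$; this is immediate from the definition $Dg(f)(y)=\max f(g^{-1}(y))$ together with compactness of $X$ (so that fibres are compact and the maxima exist and agree). The second step is the unit compatibility $nX\circ\varepsilon X=\eta X$: evaluating at $x\in X$ and $\varphi\in C(X)$ gives $\max\{\varepsilon X(x)(y)+\varphi(y)\mid y\in X\}=\varphi(x)$ since the only finite value of $\varepsilon X(x)$ is $0$, attained at $y=x$, and this is exactly $\eta X(x)(\varphi)$. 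The third step is the multiplication compatibility: I would show $nX\circ\kappa X=\mu X\circ I(nX)\circ n(IX)$ as maps $D^2X\to IX$, equivalently $\kappa X=sX\circ\mu X\circ I(nX)\circ n(IX)$. Unwinding the definition $\kappa X(F)(x)=\max\{f(x)+F(f)\mid f\in DX\}$ and the formula $\pi_\varphi\circ\mu X=\pi(\pi_\varphi)$, this reduces to checking that, after pushing $F\in D^2X$ forward to $I^2X$ via $n(IX)$ and $I(nX)$, applying $\mu X$ and then taking densities via $sX$ recovers $\kappa X(F)$; here I would again pass to the dense subset of measures of finite support (via Proposition~\ref{maps}) to make the computation with maxima finite and transparent.

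The main obstacle is precisely this last step: the multiplication $\kappa X$ on densities is defined by a formula that is not manifestly the transport of $\mu X$, so I must carefully match the "integration against a density" operation $nX$ with the Maslov-measure multiplication $\mu X(\Phi)(\varphi)=\Phi(\varphi\mapsto\mu(\varphi))$, keeping track of where the outer functional acts on $C(IX)$ versus where the density $F$ is evaluated on $DX$, and using that $n$ is a homeomorphism to translate projections $\pi_\varphi$ correctly. Once the three compatibilities are in hand, the relation $\kappa\circ D\varepsilon=\mathbf 1_D=\kappa\circ\varepsilon D$ is obtained by conjugating $\mu\circ I\eta=\mathbf 1_I=\mu\circ\eta I$ through $n$ and $s$, and continuity of all maps has already been recorded in Propositions~\ref{maps} and~\ref{functor}, so $\D$ is a quasimonad.
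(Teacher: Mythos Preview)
Your approach is logically sound but takes a substantially longer route than the paper. The paper proves Proposition~\ref{quasimonad} by a direct four-line computation: one simply evaluates $\kappa X(D(\varepsilon X)(f))(x)$ and $\kappa X(\varepsilon DX(f))(x)$ from the defining formulas and observes that both equal $f(x)$, since $\varepsilon X$ picks out a single point and $\kappa$ is a max-plus convolution. No appeal to $\I$ is needed, and naturality of $\varepsilon$ and $\kappa$ is treated as routine.

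Your plan, by contrast, is to deduce the unit laws by transporting them from $\I$ through the homeomorphisms $nX,sX$. This would work, but the ``main obstacle'' you identify---the multiplication compatibility $nX\circ\kappa X=\mu X\circ nIX\circ D(nX)$ (note: you wrote $I(nX)\circ n(IX)$, which does not type-check; the correct composite has domain $D^2X$)---is precisely the content of Theorem~\ref{isomorph}, which is the most delicate argument of the section and is proved \emph{after} Proposition~\ref{quasimonad} in the paper. In other words, you are proposing to prove the easy proposition by first establishing the hard theorem. The paper's organization is the reverse: check the quasimonad identities for $\D$ directly (trivial), then prove that $s$ is a quasimonad isomorphism (substantial), and finally conclude via Proposition~\ref{monad} that $\D$ is a monad. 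Your route conflates the first two steps, so while nothing is wrong, you would be doing far more work than the statement requires.
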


\begin{proof} Consider any compactum $X$ and a point $x\in X$.

We have $$\kappa X(D(\varepsilon X)(f))(x)=\max\{g(x)+D(\varepsilon X)(f)(g)\mid g\in DX  \}=$$ $$=\max\{g(x)+\max f((\varepsilon
X)^{-1}(g))\mid g\in DX  \}=f(x)$$
 for each $f\in DX$. Thus $\kappa X\circ D(\varepsilon X)=\id_{DX}$.

We have $$\kappa X(\varepsilon DX(f))(x)=\max\{g(x)+\varepsilon DX(f)(g)\mid g\in DX  \}=f(x)$$ for each $f\in DX$. Thus $\kappa
X\circ \varepsilon DX=\id_{DX}$.
\end{proof}

It is a routine checking that the maps $sX$ are components of the natural transformation $s:I\to D$.

\begin{theorem}\label{isomorph} The natural transformation $s$ is an isomorphism of quasimonads $\I$ and $\D$.
\end{theorem}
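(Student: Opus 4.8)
The plan is to verify that the natural transformation $s:I\to D$ is a morphism of quasimonads and that each component $sX$ is a homeomorphism; the latter follows immediately from Proposition \ref{maps}, since $nX$ provides a continuous two-sided inverse, and a quasimonad morphism whose components are homeomorphisms is automatically an isomorphism of quasimonads (with inverse given componentwise by $nX$, which is then automatically natural and a quasimonad morphism as well). So the real content is checking the two morphism identities: $s\circ\eta=\varepsilon$ (compatibility with the units) and $s\circ\mu=\kappa\circ sD\circ Is$ (compatibility with the multiplications).

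First I would check $sX\circ\eta X=\varepsilon X$. For a point $x\in X$, the idempotent measure $\eta X(x)=\delta_x$ satisfies $\delta_x(\varphi)=\varphi(x)$, so $sX(\delta_x)(y)=\inf\{\varphi(y)\mid\varphi\in C(X,(-\infty,0]),\ \varphi(y)=0\}$... more carefully, $sX(\delta_x)(y)=\inf\{\delta_x(\varphi)\mid\varphi\in C(X,(-\infty,0]),\ \varphi(y)=0\}=\inf\{\varphi(x)\mid\varphi(y)=0\}$. If $y=x$ this infimum is $0$ (the only constraint forces $\varphi(x)=0$); if $y\neq x$, by Urysohn-type arguments one can make $\varphi(x)$ arbitrarily close to $-\infty$ while keeping $\varphi(y)=0$, so the infimum is $-\infty$. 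This is exactly $\varepsilon X(x)(y)$.

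Next, and this is where I expect the bulk of the work, I would verify $sX\circ\mu X=\kappa X\circ sDX\circ I(sX)$ for every $X$. Because $I_\omega X$ is dense in $IX$ and all maps involved are continuous, it suffices to check the equality on measures of the form obtained by pushing forward finitely supported idempotent measures; more precisely one reduces, using density of $I_\omega(IX)$ in $I^2X$ together with continuity, to elements $\mathfrak{M}=\max_{i=1}^n(\lambda_i+\delta_{\nu_i})\in I^2X$ where each $\nu_i\in IX$, and one may even take each $\nu_i\in I_\omega X$. Then $\mu X(\mathfrak{M})$ is the idempotent measure $\varphi\mapsto\max_i(\lambda_i+\nu_i(\varphi))$, so $sX(\mu X(\mathfrak{M}))(x)=\inf\{\max_i(\lambda_i+\nu_i(\varphi))\mid \varphi\in C(X,(-\infty,0]),\ \varphi(x)=0\}$. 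On the other side, $I(sX)(\mathfrak{M})=\max_i(\lambda_i+\delta_{sX(\nu_i)})\in I(DX)$, then $sDX$ of that is the density on $DX$ sending $f\mapsto$ (roughly) $\max\{\lambda_i\mid sX(\nu_i)=f\}$ suitably interpreted, and finally applying $\kappa X$ and evaluating at $x$ gives $\max_i(\lambda_i+sX(\nu_i)(x))=\max_i(\lambda_i+\inf\{\nu_i(\varphi)\mid\varphi(x)=0\})$. Thus the identity to prove becomes
$$\inf_{\substack{\varphi\le 0\\ \varphi(x)=0}}\ \max_{i}\bigl(\lambda_i+\nu_i(\varphi)\bigr)=\max_{i}\Bigl(\lambda_i+\inf_{\substack{\varphi\le 0\\ \varphi(x)=0}}\nu_i(\varphi)\Bigr).$$
The inequality $\ge$ is trivial (the max of infima is at most the infimum of maxima is false in general — rather the infimum of the max dominates each term with a fixed $\varphi$, giving $\le$ in one direction and the nontrivial direction needs an argument). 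The key obstacle is the nontrivial inequality: one must produce, for a prescribed accuracy $\e>0$, a single test function $\varphi$ with $\varphi\le 0$, $\varphi(x)=0$, simultaneously nearly optimal for all finitely many $\nu_i$; here one exploits that each $sX(\nu_i)(x)$ is realised as an infimum, picks near-optimal $\varphi_i$ for each $i$, and takes $\varphi=\varphi_1\vee\cdots\vee\varphi_n$ (or $\wedge$, depending on sign conventions), using the lattice-homomorphism property (3) of idempotent measures to control $\nu_i(\varphi)$ in terms of $\nu_i(\varphi_i)$ together with monotonicity. Carefully tracking the roles of $\vee$ versus $\wedge$ and of the sign of the functions is the delicate point; once this exchange-of-$\inf$-and-$\max$ lemma is in place, naturality of $s$ (already noted) together with Proposition \ref{maps} closes the proof.
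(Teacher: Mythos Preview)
Your proposal is correct and, for the unit identity $sX\circ\eta X=\varepsilon X$, follows the paper verbatim. For the multiplication identity the two arguments diverge.

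The paper does \emph{not} immediately reduce to finitely supported measures. It first proves the inequality $a\le b$ (in your notation, $\mathrm{RHS}\le\mathrm{LHS}$) for an arbitrary $N\in I^2X$ by manufacturing, for each $f\in DX$ and each admissible $\psi$, the test function $\Phi=(\pi_\psi+c_{-f(x)})\circ nX$ on $DX$; this exhibits directly that $f(x)+\inf_\Phi N(\Phi\circ sX)\le N(\pi_\psi)$. Only for the reverse inequality $b\le a$ does the paper pass to $N\in I_\omega(I_\omega X)$, and there it argues by an explicit case analysis on whether $x$ lies among the support points $x_{is}$, constructing by hand a specific density $f$ and a specific $\psi$ that witness the common value $\lambda_j+\lambda_{js}$.

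You instead reduce both directions at once to finitely supported $\mathfrak M$, arriving at the clean minimax identity
\[
\inf_{\varphi}\max_i\bigl(\lambda_i+\nu_i(\varphi)\bigr)=\max_i\bigl(\lambda_i+\inf_{\varphi}\nu_i(\varphi)\bigr),
\]
and then dispatch the nontrivial inequality by choosing near-optimal $\varphi_i$ and combining them. This is genuinely different from the paper's explicit witness construction and is arguably more transparent. Two small points to tighten: the correct combination is $\varphi=\varphi_1\wedge\cdots\wedge\varphi_n$ (pointwise minimum), not $\vee$, since you need $\nu_i(\varphi)\le\nu_i(\varphi_i)$; and what you actually use is \emph{monotonicity} of idempotent measures, which is a consequence of property~(3) (from $\varphi\le\varphi_i$ one gets $\varphi_i=\varphi\vee\varphi_i$, hence $\nu_i(\varphi_i)\ge\nu_i(\varphi)$), rather than the $\vee$-homomorphism property applied directly. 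With those choices fixed, your argument goes through and is somewhat shorter than the paper's; the paper's version, on the other hand, yields concrete optimisers and avoids any $\varepsilon$-argument in the case $\inf_\varphi\nu_i(\varphi)=-\infty$.
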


\begin{proof} Consider any compactum $X$. We have to check equalities $sX\circ\eta X= \varepsilon X$ and $sX\circ\mu X=\kappa X\circ
sDX\circ
I(sX)$.

Take any $x\in X$. Then we have $$sX(\eta X(x))(y)=\inf\{\eta X(x)(\psi)\mid \psi\in C(X) \text{ such that } \psi(x)\ge 0\}=$$
$$=\begin{cases}
0,&x=y\\
-\infty,&x\neq y\end{cases}=\varepsilon X(x)(y)$$ for each $y\in X$.

Now, consider any $N\in I^2 X$ and $x\in X$. Then we have
$$\kappa X(sDX(I(sX)(N)))(x)=\max\{f(x)+sDX(I(sX)(N))(f)\mid f\in DX\}=$$
$$=\max\{f(x)+\inf\{N(\Phi\circ sX)\mid \Phi\in C(DX) \text{ such that } \Phi(f)\ge 0\}\mid f\in DX\}=a$$
and
$$sX(\mu X(N))(x)=\inf\{\mu(N)(\psi)\mid \psi\in C(X) \text{ such that } \psi(x)\ge 0\}=$$ $$=\inf\{N(\pi_\psi)\mid \psi\in C(X)
\text{ such that } \psi(x)\ge 0\}=b.$$

For $f\in DX$ and $\psi\in C(X) \text{ such that } \psi(x)\ge 0$ put $\Phi=(\pi_\psi+c_{-f(x)})\circ nX$. Then
$\Phi(f)=\max\{f(y)+\psi(y)\mid y\in X\}+(-f(x))\ge f(x)+\psi(x)-f(x)\ge 0$. On the other hand $f(x)+N(\Phi\circ
sX)=f(x)+N(\pi_\psi+c_{-f(x)})=N(\pi_\psi)$. Hence $a\le b$.

Since $I_\omega X$ is dense in $IX$, it is enough to prove the inverse inequality for $N\in I_\omega (I_\omega X)$. Let
$N=\max_{i=1}^n(\lambda_i+\delta_{\mu_i})$ where $\mu_i=\max_{s=1}^{k_i}(\lambda_{is}+\delta_{x_{is}})\in I_\omega X$.

If $x\neq x_{is}$ for each $i\in\{1,\dots,n\}$ and $s\in\{1,\dots,k_i\}$, then $b=-\infty$. Consider the case when $x= x_{is}$ for
some $i\in\{1,\dots,n\}$ and $s\in\{1,\dots,k_i\}$. Choose $j\in\{1,\dots,n\}$ such that $x= x_{js}$ for  some $s\in\{1,\dots,k_j\}$
and $\lambda_j+\lambda_{js}=\max\{\lambda_i+\lambda_{is}\mid i\in\{1,\dots,n\}$ and $s\in\{1,\dots,k_i\}$ such that $x= x_{is}\}$.

Consider $f\in DX$ defined as follows $$
f(y)=\begin{cases}
\lambda_{jl},&y= x_{jl} \text{ for some } l\in\{1,\dots,k_j\},\\
-\infty,& y\neq x_{jl} \text{ for each } l\in\{1,\dots,k_j\}.\end{cases}
$$

Then $sDX(IsX)(N)(f)=\lambda_j$ and $f(x)=\lambda_{js}$. It is easy to see that $a=\lambda_j+\lambda_{js}$.

Now, consider $\psi\in C(X)$ such that $\psi(x)=0$ and $\psi(x_{jl})=-\infty$ for each $l\in\{1,\dots,k_j\}$ such that $x\neq x_{jl}$.
Then $N(\pi_\psi)=\lambda_j+\lambda_{js}$, hence $b\le a$.

\end{proof}

Since the quasimonad $\I$ is a monad \cite{Zar}, Proposition \ref{monad} and Theorem \ref{isomorph} imply the following theorem.

\begin{theorem} The triple $\D=(D,\varepsilon,\kappa)$ is a monad on $\Comp$ isomorphic to the monad $\I$.
\end{theorem}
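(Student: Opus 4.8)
The plan is straightforward once the preparatory results are in hand: we have already shown in Theorem~\ref{isomorph} that $s$ is an isomorphism of \emph{quasi}monads between $\I$ and $\D$, and in Proposition~\ref{quasimonad} that $\D=(D,\varepsilon,\kappa)$ is a quasimonad. The only thing missing for $\D$ to be a full monad is the associativity law $\kappa\circ\kappa D=\kappa\circ D\kappa$, and this is precisely what Proposition~\ref{monad} delivers for free: it says that if a quasimonad is isomorphic (as a quasimonad) to a monad, then it is itself a monad.

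So the proof is essentially a one-line assembly. First I would invoke the result of \cite{Zar} that $\I=(I,\eta,\mu)$ is a monad, hence in particular a quasimonad satisfying associativity. Next, Theorem~\ref{isomorph} gives that $s\colon\I\to\D$ is an isomorphism of quasimonads, so by the final remark preceding Proposition~\ref{monad} each component $sX$ is a homeomorphism and $s$ admits an inverse quasimonad morphism $n$ (with components $nX$, as in Proposition~\ref{maps}). Applying Proposition~\ref{monad} with $\E=\D$ and $\E'=\I$, we conclude that $\D$ is a monad. Finally, since an isomorphism of quasimonads between two monads is automatically an isomorphism of monads (the monad morphism conditions are exactly the quasimonad morphism conditions, associativity being a property, not extra structure), $s$ is an isomorphism of the monads $\I$ and $\D$.

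There is essentially no obstacle here; all the real work was done in Theorem~\ref{isomorph}, whose verification of $sX\circ\mu X=\kappa X\circ sDX\circ I(sX)$ on the dense subset $I_\omega(I_\omega X)$ is the technical heart of the paper. The only point worth stating carefully is the direction of Proposition~\ref{monad}: it transfers the monad property \emph{from} $\I$ \emph{to} $\D$ along the inverse morphism, which is exactly the configuration we have. I would therefore simply write:

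\begin{proof}
By \cite{Zar}, $\I=(I,\eta,\mu)$ is a monad. By Theorem~\ref{isomorph}, $s\colon\I\to\D$ is an isomorphism of the quasimonads $\I$ and $\D=(D,\varepsilon,\kappa)$, which by Proposition~\ref{quasimonad} is indeed a quasimonad. Hence Proposition~\ref{monad} applies and shows that $\D$ is a monad. Since $s$ is an isomorphism of quasimonads between the monads $\I$ and $\D$, it is an isomorphism of monads.
\end{proof}
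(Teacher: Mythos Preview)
Your proof is correct and follows exactly the same route as the paper, which simply records that the result is an immediate consequence of Proposition~\ref{monad}, Theorem~\ref{isomorph}, and the fact from \cite{Zar} that $\I$ is a monad. Your extra care about the direction in which Proposition~\ref{monad} is applied (using the inverse isomorphism $n:\D\to\I$) is a valid clarification that the paper leaves implicit.
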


We also can complete the construction $D_1$ to a monad. For each continuous map $g:X\to Y$ between compacta $X$ and $Y$ we define the
map $D_1 g:D_1 X\to D_1 Y$ by the formula $D_1 g(f)(y)=max f(g^{-1}(y))$ for $f\in D_1X$ and $y\in Y$ (here we put $max
\emptyset=0$).
It is easy to check that  $D_1$ forms an endofunctor on $\Comp$.

For a compactum $X$ we define components $\varepsilon_1 X$ and $\kappa_1 X$ of natural transformations $\varepsilon_1:\Id_{\Comp}\to
D_1$, $\kappa_1:D_1^2\to D_1$ by $$
\varepsilon_1 X(x)(y)=\begin{cases}
1,&x=y,\\
0,&x\neq y\end{cases}
$$
for $y$, $x\in X$ and
$$\kappa_1 X(F)(x)=\max\{f(x)\cdot F(f)\mid f\in D_1X\}$$ for  $F\in D_1^2 X$ and $x\in X$.

The following theorem is derived from the analogous arguments employed in this section.

\begin{theorem} The triple $\D_1=(D_1,\varepsilon_1,\kappa_1)$ is a monad on $\Comp$  and the natural transformation $n_1:D_1\to
A^\cdot$, $s_1:A^\cdot\to D_1$ are monad isomorphisms.
\end{theorem}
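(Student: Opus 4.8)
The plan is to run, for the triple $(D_1,\varepsilon_1,\kappa_1)$ and the monad $\A^\cdot$, exactly the chain of arguments of this section for $(D,\varepsilon,\kappa)$ and $\I$, with $[-\infty,0]$, $+$ and $-\infty$ systematically replaced by $[0,1]$, $\cdot$ and $0$. Concretely, I would: (i) use that $D_1$ is already an endofunctor of $\Comp$; (ii) verify, as in Propositions~\ref{functor} and \ref{quasimonad}, that $\varepsilon_1$ and $\kappa_1$ are natural transformations and that $\kappa_1 X\circ D_1(\varepsilon_1 X)=\id_{D_1X}=\kappa_1 X\circ\varepsilon_1 D_1X$, so $\D_1$ is a quasimonad; (iii) check that the maps $s_1X$ form a natural transformation $s_1\colon A^\cdot\to D_1$ which is a morphism of quasimonads, i.e.\ $s_1X\circ hX=\varepsilon_1 X$ and $s_1X\circ mX=\kappa_1 X\circ s_1D_1X\circ A^\cdot(s_1X)$; (iv) invoke Proposition~\ref{maps1}: since each $s_1X$ is a homeomorphism with inverse $n_1X$, $s_1$ is a quasimonad isomorphism and $n_1=s_1^{-1}$ is the inverse morphism; (v) since $\A^\cdot$ is a monad, Proposition~\ref{monad} yields that $\D_1$ is a monad, and then $s_1$ and $n_1$ are monad isomorphisms, the extra (associativity) axiom being a property of the monads and not a condition on the morphisms.

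I would prove step (iii) along the lines of Theorem~\ref{isomorph}. The identity $s_1X\circ hX=\varepsilon_1 X$ is immediate from the definitions (with the honest value $0\in[0,1]$ playing the role of $-\infty$). For the second identity, fix $X$, $x\in X$, $N\in(A^\cdot)^2X$ and set $a=\kappa_1 X\bigl(s_1D_1X(A^\cdot(s_1X)(N))\bigr)(x)=\max\{\,f(x)\cdot\inf\{N(\Phi\circ s_1X)\mid\Phi\in C(D_1X,[0,1]),\ \Phi(f)=1\}\mid f\in D_1X\,\}$ and $b=s_1X(mX(N))(x)=\inf\{N(\pi_\psi)\mid\psi\in C(X,[0,1]),\ \psi(x)=1\}$. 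I would establish $a\le b$ for arbitrary $N$, and then $b\le a$ after reducing, via the density in $A^\cdot X$ of the finitely supported $\cdot$-measures (the multiplicative counterpart of $I_\omega X\subseteq IX$; cf.\ \cite{Sukh}, \cite{R6}), to $N=\max_{i=1}^n(\lambda_i\cdot\delta_{\mu_i})$ with $\mu_i=\max_{s=1}^{k_i}(\lambda_{is}\cdot\delta_{x_{is}})$; here the witnessing $f\in D_1X$ (supported on $\{x_{jl}\}$ with $f(x_{jl})=\lambda_{jl}$) and $\psi\in C(X,[0,1])$ (with $\psi(x)=1$ and $\psi(x_{jl})=0$ for $x_{jl}\ne x$, which is now literally realizable by Urysohn) are produced exactly as in the proof of Theorem~\ref{isomorph}.

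The step I expect to be the main obstacle is the inequality $a\le b$, because the ambient structure is the multiplicative monoid $([0,1],\cdot)$ rather than the group $(\R,+)$: one can contract towards $0$ but cannot ``shift up'' towards $1$, so the test functional $\Phi=(\pi_\psi+c_{-f(x)})\circ nX$ of Theorem~\ref{isomorph} has no verbatim analogue (its formal transcription $g\mapsto\frac{1}{f(x)}\,n_1X(g)(\psi)$ need not be $[0,1]$-valued). I would instead take, for $f$ with $f(x)>0$, the functional $\Phi=\bigl(\frac{1}{f(x)}(\pi_\psi\circ n_1X)\bigr)\wedge 1_{D_1X}$ (the case $f(x)=0$ is trivial, the corresponding term of $a$ being $0\le b$). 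This $\Phi$ lies in $C(D_1X,[0,1])$; it satisfies $\Phi(f)=1$ because $n_1X(f)(\psi)=\max_y f(y)\psi(y)\ge f(x)\psi(x)=f(x)$; and, using $n_1X\circ s_1X=\id$, one gets $\Phi\circ s_1X=\bigl(\frac{1}{f(x)}\pi_\psi\bigr)\wedge 1_{A^\cdot X}$. Then property~(2) of $\cdot$-measures ($N(\lambda\cdot h)=\lambda\cdot N(h)$ for $\lambda\in[0,1]$) together with their monotonicity give $f(x)\cdot N(\Phi\circ s_1X)=N\bigl(\pi_\psi\wedge(f(x))_{A^\cdot X}\bigr)\le N(\pi_\psi)$, and taking the infimum over $\psi$ and the maximum over $f$ yields $a\le b$. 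Everything else in (ii) and (iii) is a routine transcription of Propositions~\ref{functor}, \ref{quasimonad} and of the proof of Theorem~\ref{isomorph}.
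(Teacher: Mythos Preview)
Your proposal is correct and is precisely the ``analogous arguments employed in this section'' that the paper invokes in lieu of a written proof; the paper gives no further details for this theorem.

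You have in fact been more careful than the paper's one-line pointer. The only place where the transcription from $(\R_{\max},+)$ to $([0,1],\cdot)$ is not word-for-word is exactly the step you isolate, namely the construction of the test functional $\Phi$ in the proof of the inequality $a\le b$ (Theorem~\ref{isomorph}). Your replacement $\Phi=\bigl(\tfrac{1}{f(x)}\,(\pi_\psi\circ n_1X)\bigr)\wedge 1_{D_1X}$ is the right fix: it stays in $C(D_1X,[0,1])$, satisfies $\Phi(f)=1$, and the homogeneity axiom of $\cdot$-measures together with monotonicity indeed yield $f(x)\cdot N(\Phi\circ s_1X)=N\bigl(\pi_\psi\wedge (f(x))_{A^\cdot X}\bigr)\le N(\pi_\psi)$, hence $a\le b$. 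The remaining steps (quasimonad axioms for $\D_1$, naturality of $s_1$, the density reduction for $b\le a$, and the appeal to Propositions~\ref{maps1} and~\ref{monad}) are, as you say, routine transcriptions.
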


\section{The main result and some consequences} Now it is enough to prove isomorphism of densities monads in order to obtain that
monads $\I$ and $\A^\cdot$ are isomorphic. For a compactum $X$ define the map $lX:DX\to D_1X$ as follows $lX(f)(x)=\exp(f(x))$ for
$f\in DX$ (we put $\exp(-\infty)=0$). It is easy to check that $lX$ is well-defined and continuous and the maps $lX$ are components of
the natural transformation $l:D\to D_1$.

\begin{theorem}\label{isomorph D} The natural transformation $l$ is a morphism of monads $\D$ and  $\D_1$.
\end{theorem}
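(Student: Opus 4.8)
The plan is to verify the two defining equations of a monad morphism for the natural transformation $l:D\to D_1$, namely $l\circ\varepsilon = \varepsilon_1$ (compatibility with units) and $l\circ\kappa = \kappa_1\circ lD_1\circ Dl$ (compatibility with multiplications). Both are to be checked componentwise at an arbitrary compactum $X$ and evaluated at an arbitrary point. Since $lX$ is already known to be a well-defined continuous natural transformation (stated just above), the only content is the pair of identities.

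First I would treat the unit equation. For $x,y\in X$ we have $lX(\varepsilon X(x))(y)=\exp(\varepsilon X(x)(y))$, which equals $\exp(0)=1$ when $x=y$ and $\exp(-\infty)=0$ when $x\neq y$; this is exactly $\varepsilon_1 X(x)(y)$. So $lX\circ\varepsilon X=\varepsilon_1 X$, and this step is immediate.

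Next I would handle the multiplication equation, which is the substantive part. Fix a compactum $X$, an element $F\in D^2X=D(DX)$, and a point $x\in X$; the goal is $lX(\kappa X(F))(x)=\kappa_1 X\bigl((lD_1 X)(Dl(X))(F)\bigr)(x)$ — more precisely, with $Dl$ and $lD_1$ read off from the definitions. The left-hand side is $\exp\bigl(\max\{f(x)+F(f)\mid f\in DX\}\bigr)=\max\{\exp(f(x))\cdot\exp(F(f))\mid f\in DX\}$, using that $\exp$ is a continuous increasing bijection $[-\infty,0]\to[0,1]$ carrying $+$ to $\cdot$ and sups to sups (here I would invoke the compactness of $DX$ and upper semicontinuity so the maxima are attained and $\exp$ commutes with them). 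On the right-hand side, the element $Dl(X)(F)\in D(D_1X)$ is given by $Dl(X)(F)(g)=\max F\bigl((lX)^{-1}(g)\bigr)$ for $g\in D_1X$, and then $lD_1X$ composes with $\exp$ in the outer coordinate; applying $\kappa_1 X$ and evaluating at $x$ gives $\max\{g(x)\cdot\exp(\max F((lX)^{-1}(g)))\mid g\in D_1X\}$. Since $lX:DX\to D_1X$ is a homeomorphism (by Propositions~\ref{maps1} and the fact that $\exp$ is a homeomorphism $[-\infty,0]\to[0,1]$, so its fibers $(lX)^{-1}(g)$ are singletons $\{f\}$ with $g=lX(f)$, i.e.\ $g(x)=\exp(f(x))$), the substitution $g=lX(f)$ turns the right-hand side into $\max\{\exp(f(x))\cdot\exp(F(f))\mid f\in DX\}$, matching the left-hand side.

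The main obstacle I anticipate is bookkeeping rather than conceptual depth: one must be careful that $lX$ is genuinely a homeomorphism so that the fiberwise maximum $\max F((lX)^{-1}(g))$ collapses to the single value $F(f)$, and one must justify that $\exp$ commutes with the (attained) maxima over the compact space $DX$ and respects the extended arithmetic conventions ($-\infty+t=-\infty$ versus $0\cdot s=0$, and $-\infty\vee t=t$ versus $0\vee s=s$). Once these are in place, both sides reduce to the same expression $\max_{f\in DX}\exp(f(x))\exp(F(f))$ and the identity follows; note that the statement only claims $l$ is a \emph{morphism} of monads (not an isomorphism), so no inverse need be produced here.
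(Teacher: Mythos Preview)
Your proposal is correct and follows essentially the same route as the paper: verify the unit equation pointwise, then expand both sides of the multiplication equation and reduce each to $\max_{f\in DX}\exp(f(x))\cdot\exp(F(f))$ via the substitution induced by the bijection $lX$. The paper phrases the substitution using the explicit inverse $pX(g)=\ln\circ g$, while you argue directly that the fibers $(lX)^{-1}(g)$ are singletons; these are the same argument. One small slip: your appeal to Proposition~\ref{maps1} is misplaced (that proposition concerns $n_1X$ and $s_1X$, not $lX$); the homeomorphism property of $lX$ follows simply from $\exp:[-\infty,0]\to[0,1]$ being a homeomorphism applied pointwise, which you also state.
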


\begin{proof} Consider any compactum $X$. The map $lX$ has inverse continuous map $pX:D_1X\to DX$ defined as follows
$pX(g)(x)=\ln(g(x))$ for $g\in D_1X$ (we put $\ln(0)=-\infty$). The equality $lX\circ\varepsilon X= \varepsilon_1 X$ is obvious.   We
have to check the equality  and $lX\circ\kappa X=\kappa_1 X\circ lDX\circ
D_1(lX)$.

Now, consider any $F\in D^2 X$ and $x\in X$. Then we have
$$lX(\kappa X(F))(x)=\exp(\kappa X(F)(x))=\exp(\max\{g(x)+F(g)\mid g\in DX\})=$$
$$=\max\{\exp(g(x))\cdot\exp(F(g))\mid g\in DX\}=\max\{f(x)\cdot \exp(F(pX(f)))\mid f\in D_1X\}=$$
$$=\max\{f(x)\cdot \exp((D_1(lX)(F))(f)\mid f\in D_1X\}=$$
$$=\max\{f(x)\cdot lDX(D_1(lX)(F))(f)\mid f\in D_1X\}=\kappa_1 X(lDX(D_1(lX)(F)))(x).$$
\end{proof}

Combining Theorem \ref{isomorph D} with results of the previous section we obtain the main result of our paper.

\begin{theorem}\label{main} The  monads $\A^\cdot$ and  $\I$ are isomorphic.
\end{theorem}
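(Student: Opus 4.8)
\textbf{Proof plan for Theorem \ref{main}.} The plan is to exhibit the isomorphism $\A^\cdot\cong\I$ as a composite of monad isomorphisms already established in the paper, with Theorem \ref{isomorph D} supplying the missing link at the level of densities. First I would recall that by Theorem \ref{isomorph} the natural transformation $s:I\to D$ is an isomorphism of quasimonads, hence (by Proposition \ref{monad}, since $\I$ is a monad) an isomorphism of monads $\I\cong\D$; dually, the theorem on $D_1$ gives a monad isomorphism $n_1:D_1\to A^\cdot$, with inverse $s_1$, so $\D_1\cong\A^\cdot$. It therefore suffices to show that $\D\cong\D_1$ as monads.

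The candidate isomorphism at the functor level is $l:D\to D_1$ with components $lX(f)=\exp\circ f$, whose inverse $pX(g)=\ln\circ g$ was already noted to be continuous; thus each component $lX$ is a homeomorphism. By Theorem \ref{isomorph D}, $l$ is a \emph{morphism} of monads $\D\to\D_1$. It remains to observe that $p$ is also a monad morphism: the equalities $pX\circ\varepsilon_1 X=\varepsilon X$ and $pX\circ\kappa_1 X=\kappa X\circ pD_1X\circ D_1(pX)$ follow by applying $\ln$ to the identities proved for $l$ (equivalently, they are the image of the $l$-identities under the functorial correspondence $pX=(lX)^{-1}$). Since $l$ is a monad morphism whose components are all homeomorphisms, the general remark preceding Proposition \ref{monad} shows $l$ is a monad isomorphism, so $\D\cong\D_1$.

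Chaining the three isomorphisms gives a monad isomorphism
$$
\A^\cdot\;\xrightarrow{\ s_1\ }\;\D_1\;\xrightarrow{\ p\ }\;\D\;\xrightarrow{\ n\ }\;\I,
$$
where $n:D\to I$ is the inverse of $s$ (with components $nX$ from Proposition \ref{maps}); equivalently the inverse isomorphism $\I\to\A^\cdot$ is $n_1\circ l\circ s$. This proves the theorem.

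I do not expect any genuine obstacle here: all the substantive work has been carried out in Theorems \ref{isomorph} and \ref{isomorph D} and in Propositions \ref{monad}--\ref{quasimonad}, so the only thing to verify is that composing monad isomorphisms yields a monad isomorphism, which is immediate from the definition of a (quasi)monad morphism and the fact that inverses of homeomorphic-componentwise monad morphisms are again monad morphisms. If one wanted to be completely explicit, the mildly fiddly point would be bookkeeping the two functor-composite factors $\psi E'\circ E\psi$ in the morphism identity when forming the composite $n\circ p\circ s_1$, but this is the same routine diagram chase as in the proof of Proposition \ref{monad} and presents no real difficulty.
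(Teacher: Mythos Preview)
Your proposal is correct and follows exactly the approach the paper itself uses: the paper's proof is the single sentence ``Combining Theorem \ref{isomorph D} with results of the previous section we obtain the main result of our paper,'' and you have simply unpacked this into the explicit chain $\I\cong\D\cong\D_1\cong\A^\cdot$ via $s$, $l$, and $n_1$. Your added remark that $l$ is a monad \emph{isomorphism} (not merely a morphism) because its components are homeomorphisms is precisely the ``general remark preceding Proposition \ref{monad}'' you cite, so nothing further is needed.
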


An isomorphism between the monad $\A^\cdot$ and a monad related to possibility capacities was established in  \cite{R2}. Let us remind
shortly some notions and facts related to that isomorphism. We start from the definition of capacity on a compactum $X$. We follow a
terminology of \cite{NZ}. By $\F(X)$ we denote the family of all closed subsets of a compactum $X$.

\begin{df} A function $\nu:\F(X)\to [0,1]$  is called an {\it upper-semicontinuous capacity} on $X$ if the three following properties
hold for each closed subsets $F$ and $G$ of $X$:

1. $\nu(X)=1$, $\nu(\emptyset)=0$,

2. if $F\subset G$, then $\nu(F)\le \nu(G)$,

3. if $\nu(F)<a$ for $a\in[0,1]$, then there exists an open set $O\supset F$ such that $\nu(B)<a$ for each compactum $B\subset O$.
\end{df}

A capacity $\nu$ is extended in \cite{NZ} to all open subsets $U\subset X$ by the formula $$\nu(U)=\sup\{\nu(K)\mid K \text{ is a
closed subset of }  X \text{ such that } K\subset U\}.$$

It was proved in \cite{NZ} that the space $MX$ of all upper-semicontinuous  capacities on a compactum $X$ is a compactum as well, if a
topology on $MX$ is defined by a subbase that consists of all sets of the form $O_-(F,a)=\{c\in MX\mid c(F)<a\}$, where $F$ is a
closed subset of $X$, $a\in [0,1]$, and $O_+(U,a)=\{c\in MX\mid c(U)>a\}$, where $U$ is an open subset of $X$, $a\in [0,1]$. Since all
capacities we consider here are upper-semicontinuous, in the following we call elements of $MX$ simply capacities.

A capacity $\nu\in MX$ for a compactum $X$ is called  a possibility capacity if for each family $\{A_t\}_{t\in T}$ of closed subsets
of $X$ such that $\bigcup_{t\in T}A_t$ is a closed subset of $X$ we have  $\nu(\bigcup_{t\in T}A_t)=\sup_{t\in T}\nu(A_t).$
(See \cite{WK} for more details.)
We denote by $\Pi X$ a subspace of $MX$ consisting of all possibility capacities. Since $X$ is compact and $\nu$ is
upper-semicontinuous, $\nu\in \Pi X$ iff $\nu$ satisfies the simpler requirement that $\nu(A\cup B)=\max\{\nu(A),\nu(B)\}$ for each
closed subsets $A$ and $B$ of $X$.  It is easy to check that  $\Pi X$ is a closed subset of $MX$.

The construction $\Pi$ was completed to the monad $\U_\cdot=(\Pi,\eta,\mu_\cdot)$ (where $\cdot$ is the usual multiplication
operation) in \cite{NR}.  For a continuous map of compacta $f:X\to Y$ we define the map $f:\Pi X\to \Pi Y$ by the formula $\Pi
f(\nu)(A)=\nu(f^{-1}(A))$ where $\nu\in \Pi X$ and $A$ is a closed subset of $Y$. The map $\Pi f$ is continuous.  In fact, this
extension of the construction $\Pi$ defines the possibility capacity functor $\Pi$ in the category $\Comp$.

The components of the  natural transformations $\eta$ and $\mu_\cdot$ are defined as follows:
$$
\eta X(x)(F)=\begin{cases}
1,&x\in F,\\
0,&x\notin F;\end{cases}
$$

For a closed set $F\subset X$ and for $t\in [0,1]$ put $F_t=\{c\in MX\mid c(F)\ge t\}$. Define the map $\mu_\cdot X:\Pi^2 X\to \Pi X$
by the formula $$\mu_\cdot X(\C)(F)=\max\{\C(F_t)\cdot t\mid t\in(0,1]\}.$$

An isomorphism of monads $\U_\cdot$ and $A^\cdot$  was built in \cite{R6}.  Composing this isomorphism with the isomorphism between
monads $A^\cdot$ and $\I$ built in this paper we obtain an isomorphism between monads $\U_\cdot$ and  $\I$. After some technical
transformations  we can represent components of this isomorphism by the formula $iX(c)(\varphi)=\max\{\varphi(x)+\ln(c(\{x\}))\mid
x\in X\}$ for a compactum $X$, $c\in \Pi(X)$ and $\varphi\in C(X)$. We can consider $iX(c)(\varphi)$ as an fuzzy integral of the
function $\varphi$ related to the
possibility capacity $c$. Evidently we have the following characterization of such integral: $\nu\in IX$ iff there exists $c\in\Pi X$
such that $\nu(\varphi)=iX(c)(\varphi)$ for any $\varphi\in C(X)$.

Certainly the above formula has sense only for possibility capacity. But we can consider another representation. We denote
$\varphi_t=\{x\in X\mid \varphi(x)\ge t\}$ for $\varphi\in C(X)$ and $t\in\R$.

\begin{proposition}\label{repr} We have $\max\{\varphi(x)+\ln(c(\{x\}))\mid x\in X\}=\max\{\ln(c(\varphi_t))+ t\mid t\in\R\}$ for any
$c\in \Pi(X)$ and $\varphi\in C(X)$.
\end{proposition}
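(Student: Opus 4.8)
The plan is to prove the equality $\max\{\varphi(x)+\ln(c(\{x\}))\mid x\in X\}=\max\{\ln(c(\varphi_t))+t\mid t\in\R\}$ by establishing both inequalities separately, keeping in mind the convention $\ln 0=-\infty$ and the fact that $c$ is a possibility capacity, so $c(F)=\max\{c(\{x\})\mid x\in F\}$ for every closed $F\subset X$ (this uses upper semicontinuity together with compactness of $F$: indeed $c(F)=\sup\{c(K)\mid K\subset F \text{ closed}\}$ reduces via a standard compactness argument to the supremum over singletons, which is attained because $x\mapsto c(\{x\})$ is upper semicontinuous on the compactum $F$). It is also worth noting at the outset that both maxima are genuinely attained: on the left, $x\mapsto\varphi(x)+\ln(c(\{x\}))$ is upper semicontinuous on the compactum $X$ and not identically $-\infty$ (since $c(X)=1$ forces $c(\{x_0\})>0$ for some $x_0$ when $X$ is nonempty, by the possibility property applied to a finite cover, or more carefully by upper semicontinuity), and on the right the relevant range of $t$ is the compact interval $[\min\varphi,\max\varphi]$ because outside it the sets $\varphi_t$ are empty or all of $X$.

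First I would prove $\le$. Fix $x\in X$ achieving the left-hand maximum, and set $t=\varphi(x)$. Then $x\in\varphi_t$, so by monotonicity of $c$ we get $c(\varphi_t)\ge c(\{x\})$, hence $\ln(c(\varphi_t))+t\ge\ln(c(\{x\}))+\varphi(x)$, and the right-hand side is at most $\max\{\ln(c(\varphi_t))+t\mid t\in\R\}$. (If $c(\{x\})=0$ for the optimal $x$, the left side is $-\infty$ and the inequality is trivial.)

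Next I would prove $\ge$. Fix $t\in\R$ achieving the right-hand maximum; we may assume $c(\varphi_t)>0$, else that side is $-\infty$. The set $\varphi_t=\{x\in X\mid\varphi(x)\ge t\}$ is closed, hence compact, so by the possibility property $c(\varphi_t)=\max\{c(\{x\})\mid x\in\varphi_t\}$; pick $x^*\in\varphi_t$ with $c(\{x^*\})=c(\varphi_t)$. Since $x^*\in\varphi_t$ we have $\varphi(x^*)\ge t$, so $\varphi(x^*)+\ln(c(\{x^*\}))\ge t+\ln(c(\varphi_t))$, and the left-hand side of the displayed equality is at least this quantity. Combining the two inequalities yields the claim.

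The main obstacle is the reduction $c(F)=\max\{c(\{x\})\mid x\in F\}$ for closed $F$, which underpins the $\ge$ direction: one must invoke both the possibility (max-additivity over closed subsets, extended to arbitrary unions with closed union) and upper semicontinuity to pass from a finite or arbitrary union of singletons to the supremum being attained. Concretely, for closed $F$ and any $a<c(F)$, upper semicontinuity is not directly what gives a point with $c(\{x\})\ge a$; rather one argues that if $c(\{x\})<a$ for all $x\in F$, then by property 3 of the capacity each such $x$ has an open neighbourhood $O_x$ with $c(B)<a$ for all compacta $B\subset O_x$; finitely many $O_{x_1},\dots,O_{x_k}$ cover $F$, and writing $F=\bigcup_i(F\cap\Cl O_{x_i})$ as a finite union of closed sets each of capacity $<a$, the possibility property gives $c(F)<a$, a contradiction. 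Once this lemma is in hand, the rest is the two short comparisons above; I would state the lemma explicitly (or cite the characterization of $\Pi X$ already recorded in the excerpt) before running the argument.
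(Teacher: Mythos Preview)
The paper states this proposition without proof, so there is no argument to compare against. Your two-inequality strategy is correct and is the natural route: the $\le$ direction via $t=\varphi(x)$ and monotonicity of $c$, and the $\ge$ direction via the identity $c(F)=\max_{x\in F}c(\{x\})$ for closed $F$, which is exactly what distinguishes possibility capacities.

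One small slip in your closing paragraph: in the covering argument you write $F=\bigcup_i(F\cap\Cl O_{x_i})$ and claim each piece has capacity $<a$, but $F\cap\Cl O_{x_i}$ need not lie inside $O_{x_i}$, so property~3 does not apply to it directly. This does no damage, because your earlier remark already gives the lemma outright: applying the very definition of possibility capacity to the family $\{\{x\}\}_{x\in F}$, whose union is the closed set $F$, yields $c(F)=\sup_{x\in F}c(\{x\})$, and upper semicontinuity of $x\mapsto c(\{x\})$ on the compact set $F$ gives attainment of the supremum. If you prefer to keep the covering argument, fix it by shrinking the finite open cover $\{O_{x_i}\}$ of $F$ to a closed cover $\{F_i\}$ with $F_i\subset O_{x_i}$ (available in any normal, in particular compact Hausdorff, space).
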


Let us remark that the second formula can by applied to any capacity. We use it to define a fuzzy integral. Let $X$ be a compactum.

\begin{df} Let $f\in  C(X)$ be a function and $c\in MX$. The max-plus integral of $f$ w.r.t. $c$ is given by  the formula
$$\int_X^{\vee+} fdc=\max\{\ln(c(\varphi_t))+ t\mid t\in\R\}.$$
\end{df}

Let us remark that we can consider the defined above integral as a logarithmic version of the Shilkret integral \cite{Shi}. Now, we
will obtain some characterization of the max-plus integral.

Let $X$ be a compactum.  We call two functions $\varphi$, $\psi\in C(X,[0,1])$ comonotone (or equiordered) if
$(\varphi(x_1)-\varphi(x_2))\cdot(\psi(x_1)-\psi(x_2))\ge 0$ for each $x_1$, $x_2\in X$. Let us remark that a constant function is
comonotone to any function $\psi\in C(X,[0,1])$.

\begin{theorem}\label{charac} A functional $I:C(X)\to\R$ satisfies the conditions:
\begin{enumerate}
\item  $I(1_X)=1$;
\item  $I(\psi\vee\varphi)=I(\psi)\vee I(\varphi)$ for each comonotone functions $\varphi$, $\psi\in C(X,[0,1])$;
\item  $I(\lambda_X+\varphi)=\lambda+I(\varphi)$ for each $\lambda\in \R$ and $\varphi\in C(X)$

\end{enumerate}

if and only if there exists $c\in MX$ such that $I(\varphi)=\int_X^{\vee+} \varphi dc$ for each $\psi\in C(X,[0,1])$.

\end{theorem}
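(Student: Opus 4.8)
The plan is to prove both implications of the characterization, with the ``only if'' direction being the substantial one. For the ``if'' direction, assume $I(\varphi)=\int_X^{\vee+}\varphi\,dc$ for some $c\in MX$. Property (1) is immediate since $\varphi_t=X$ for $t\le 1$ and $c(X)=1$, while $\varphi_t=\emptyset$ for $t>1$, so the maximum is attained at $t=1$ with value $\ln(c(X))+1=1$. Property (3) follows because $(\lambda_X+\varphi)_t=\varphi_{t-\lambda}$, so the substitution $t\mapsto t-\lambda$ shifts the maximum by exactly $\lambda$. For property (2), if $\varphi$ and $\psi$ are comonotone then the superlevel sets are nested: for each $t$, either $\varphi_t\subset\psi_t$ or $\psi_t\subset\varphi_t$ (this is the standard consequence of comonotonicity), hence $(\psi\vee\varphi)_t=\psi_t\cup\varphi_t$ equals the larger of the two, and by monotonicity of $c$ we get $c((\psi\vee\varphi)_t)=\max\{c(\psi_t),c(\varphi_t)\}$; taking the maximum over $t$ and interchanging the two maxima yields $I(\psi\vee\varphi)=I(\psi)\vee I(\varphi)$.

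For the ``only if'' direction, suppose $I$ satisfies (1)--(3). The idea is to recover the capacity from $I$ directly: for a closed set $F\subset X$ define $c(F)=\exp\bigl(\sup\{I(\varphi)\mid \varphi\in C(X), \varphi\le 0_X, \varphi|_F=0_X\}\bigr)$, mirroring the density formula $sX$ from Section~2 but applied to the functional-as-integral correspondence; equivalently one can set $c(F)$ via a limit $\lim_{\lambda\to+\infty} \exp\bigl(I(\varphi_{F,\lambda})-\lambda\bigr)$ for a decreasing net of functions approximating $\lambda$ on $F$ and $-\infty$ off $F$, but the cleanest route is through the monad isomorphisms already established. Indeed, the functional $I$ restricted to $C(X,[0,1])$ together with property (2) for comonotone functions is precisely the defining condition of a comonotone $\vee$-homomorphism, and the theory of $t$-normed (here, product) integrals and their characterization — combined with Theorem~\ref{main} and Proposition~\ref{repr} — identifies such functionals with the max-plus integrals against arbitrary capacities. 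Concretely: one checks that $c$ as defined above lies in $MX$ (conditions 1--3 of the capacity definition: normalization from property (1), monotonicity since enlarging $F$ shrinks the admissible set of $\varphi$, and upper semicontinuity from a routine approximation argument using continuity of the $\varphi$'s), and then verifies $I(\varphi)=\int_X^{\vee+}\varphi\,dc$ by first establishing it for functions of the form $\varphi=\max_{i=1}^n(\lambda_i+\chi_{F_i})$ (finite ``max-plus simple'' functions, where $\chi_{F_i}$ is $0$ on $F_i$ and $-\infty$ elsewhere), using comonotonicity of a suitable rearrangement together with property (2), and then passing to general $\varphi\in C(X)$ by uniform approximation, exploiting the continuity properties of both sides.

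The main obstacle will be the verification step $I(\varphi)=\int_X^{\vee+}\varphi\,dc$ for general continuous $\varphi$, and in particular handling the passage from max-plus simple functions to arbitrary continuous functions: property (2) only gives the homomorphism property for \emph{comonotone} pairs, so one must carefully decompose $\varphi$ into a ``monotone layer-cake'' of pieces that are pairwise comonotone (the superlevel-set structure $\varphi=\sup_t(t+\chi_{\varphi_t})$ does this, since all the $\chi_{\varphi_t}$ are mutually comonotone as their superlevel sets form a chain), control the errors under uniform approximation, and show that the $\sup$ over the continuum of levels $t$ is actually attained and behaves continuously — this is where compactness of $X$, upper semicontinuity of $c$, and continuity of $\varphi$ all enter. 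A secondary technical point is checking that the definition of $c(F)$ is independent of the approximating functions and genuinely produces an element of $MX$ rather than merely a monotone set function; the upper-semicontinuity axiom (condition 3) requires producing, for $c(F)<a$, an open neighborhood $O\supset F$ with $c(B)<a$ for all closed $B\subset O$, which follows by taking a witness function $\varphi$ with $\exp(I(\varphi))$ close to $c(F)$ and letting $O=\{x\mid\varphi(x)>-\delta\}$ for small $\delta$, then using property (3) to estimate $I$ on functions supported near $B$.
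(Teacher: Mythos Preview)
The paper states Theorem~\ref{charac} \emph{without proof}: the theorem environment is closed and Section~5 begins immediately afterwards. There is therefore no ``paper's own proof'' to compare your proposal against; presumably the author intends the result to be read as a transport, via Theorem~\ref{main} and Proposition~\ref{repr}, of the known characterizations of $t$-normed (Shilkret/Sugeno-type) integrals cited in the introduction (\cite{CLM}, \cite{Rad}, \cite{R5}, \cite{R2}). Your appeal to exactly this mechanism in the middle of the ``only if'' paragraph is thus very much in the spirit of the paper.

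On the substance of your sketch: the ``if'' direction is fine. Two points in the ``only if'' direction need repair. First, your formula for the capacity has the wrong extremum: with $c(F)=\exp\bigl(\sup\{I(\varphi)\mid \varphi\le 0_X,\ \varphi|_F=0\}\bigr)$ the supremum is always $0$ (take $\varphi=0_X$), so $c\equiv 1$. It must be an $\inf$, exactly as in the density map $sX$ you cite; with $\inf$ your monotonicity and upper-semicontinuity arguments go through as written. Second, the ``max-plus simple functions'' $\max_i(\lambda_i+\chi_{F_i})$ are not in $C(X)$ unless the $F_i$ are clopen, so $I$ is not defined on them and you cannot ``first establish the formula'' there and then approximate. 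The honest route is the opposite: for a fixed $\varphi\in C(X)$, discretize the range, replace each indicator $\chi_{\varphi_{t_k}}$ by a continuous $\psi_k\le 0$ with $\psi_k=0$ on $\varphi_{t_k}$ and $\psi_k\le -N$ off a small neighbourhood, observe that the functions $t_k+\psi_k$ are pairwise comonotone (their superlevel sets still form a chain), apply condition~(2) iteratively, and then let the mesh and $1/N$ tend to $0$ using condition~(3) to control the errors. This is precisely the ``monotone layer-cake'' you describe in the final paragraph, but it has to be executed inside $C(X)$ from the start rather than via non-continuous simple functions.
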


 \section{Convexities  related to monads $\I$ and $\A^\cdot$}

 Max-plus convex sets were introduced in \cite{Z} and found many applications (see for example \cite{BCh}). It was shown in \cite{R1}
 that each monad generates a convexity structure on its algebras. We will show in this section that convexities generated by the monad
 $\I$ coincide with  max-plus convexities.

Let $\tau$ be a cardinal number. Given $x, y \in \R^\tau$ and $\lambda\in[-\infty,0]$, we denote by $y\vee x$ the coordinatewise
maximum of x and y and by $\lambda+ x$ the vector obtained from $x$ by adding $\lambda$ to each of its coordinates. A subset $A$ in
$\R^\tau$ is said to be  max-plus convex if $(\alpha+ a)\oplus  b\in A$ for all $a, b\in A$ and $\alpha\in[-\infty,0]$. It is easy to
check that $A$  is   max-plus convex iff $\vee_{i=1}^n\lambda_i+ x_i\in A$ for all $x_1,\dots, x_n\in A$ and
$\lambda_1,\dots,\lambda_n\in[-\infty,0]$ such that $\vee_{i=1}^n\lambda_i=0$. In the following by max-plus convex compactum we mean a
max-plus convex compact subset of $\R^\tau$. It is shown in \cite{Zar} that $IX$ is a compact max-plus subset of $\R^{C(X)}$.

We will need some categorical notions and the construction of convexities generated by a monad from \cite{R1}.  Let $\T=(T,\eta,\mu)$
be a monad in the category ${\Comp}$. A
pair $(X,\xi)$, where $\xi:TX\to X$ is a map, is called a $\T$-{\it
algebra} if $\xi\circ\eta X=id_X$ and $\xi\circ\mu X=\xi\circ
T\xi$. Let $(X,\xi)$, $(Y,\xi')$ be two $\T$-algebras. A map
$f:X\to Y$ is called a  morphism of $\T$-algebras if $\xi'\circ
Tf=f\circ\xi$. A morphism of $\T$-algebras
 is called an isomorphism   if there exists an inverse morphism of $\T$-algebras.

Let $(X,\xi)$ be an $\F$-algebra for a monad $\F=(F,\eta,\mu)$ and let $A$ be a closed subset of $X$. Denote by $\chi_A$ the quotient
map
$\chi_A:X\to X/A$ (the equivalence classes  are one-point sets $\{x\}$ for $x\in X\setminus A$ and the set $A$) and put $a=\chi_A(A)$.
Denote $A^+=(F\chi_A)^{-1}(\eta(X/A)(a))$.    Define the $\F$-{\it convex
hull} $\conv_\F(A)$ of $A$ as follows
$\conv_\F(A)=\xi(A^+)$. Put additionally
$\conv_\F(\emptyset)=\emptyset$. We define the family
$\C_\F(X,\xi)=\{A\subset X|A $ is closed and $\conv_\F(A)=A\}$.
The elements of the family $\C_\F(X,\xi)$ will be called $\F$-{\it convex}.

Let $A\subset  \R^T$ be a compact max-plus convex subset. For each $t\in T$ we put $f_t=\pr_t|_A:A\to \R$ where $\pr_t:\R^T\to\R$ is
the natural projection.    Given $\mu\in IA$, the point $\beta_A(\mu)\in\R^T$ is defined by the conditions
$\pr_t(\beta_A(\mu))=\mu(f_t)$ for each $t\in T$. It is shown in \cite{Zar} that $\beta_A(\mu)\in  A$ for each $\mu\in I(A)$ and the
map $\beta_A : I(A)\to A$ is continuous.
The map $\beta_A$ is called the idempotent barycenter map \cite{Zar}. The following proposition can be proved by a routine checking.

\begin{proposition} Let $K\subset  \R^\tau$ be a compact max-plus convex convex subset. Then the pair $(K,\beta_K)$ is an
$\I$-algebra.
\end{proposition}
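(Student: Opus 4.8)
The plan is to verify directly the two $\I$-algebra axioms for $(K,\beta_K)$, treating each as an equality of points of $\R^\tau$ and checking it coordinatewise. I shall use freely that each $f_t=\pr_t|_K$ belongs to $C(K)$ (since $K$ is a compactum), that $IK$ is a compactum so that each projection $\pi_{f_t}\colon IK\to\R$ belongs to $C(IK)$, and that $\beta_K\colon IK\to K$ is a well-defined continuous map — all of these recalled or cited earlier. The only property of the barycenter that will be needed is its very definition, namely $\pr_t(\beta_K(\nu))=\nu(f_t)$ for all $\nu\in IK$ and $t\in\tau$, which is precisely the identity $f_t\circ\beta_K=\pi_{f_t}$ in $C(IK)$.

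First I would check $\beta_K\circ\eta K=\id_K$. Given $x\in K$, for each $t\in\tau$ we have $\pr_t(\beta_K(\eta K(x)))=\eta K(x)(f_t)=f_t(x)=\pr_t(x)$, using $\pi_\varphi\circ\eta K=\varphi$; hence $\beta_K(\eta K(x))=x$.

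Next I would check the associativity axiom $\beta_K\circ\mu K=\beta_K\circ I\beta_K$. Fix $N\in I^2K=I(IK)$ and $t\in\tau$. From $\mu K(N)(\varphi)=N(\pi_\varphi)$ we obtain
\[\pr_t\bigl(\beta_K(\mu K(N))\bigr)=\mu K(N)(f_t)=N(\pi_{f_t}),\]
and from $I\beta_K(N)(\psi)=N(\psi\circ\beta_K)$ we obtain
\[\pr_t\bigl(\beta_K(I\beta_K(N))\bigr)=I\beta_K(N)(f_t)=N(f_t\circ\beta_K).\]
Since $f_t\circ\beta_K=\pi_{f_t}$, the two right-hand sides coincide; as $t\in\tau$ was arbitrary, $\beta_K(\mu K(N))=\beta_K(I\beta_K(N))$.

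I do not expect any real obstacle here: once the coordinatewise description of $\beta_K$ is unwound, both axioms reduce to the tautology $f_t\circ\beta_K=\pi_{f_t}$ together with the defining formulas for $\eta$, $\mu$ and for the action of $I$ on maps. The only points deserving a moment's care are the standing well-definedness, continuity and boundedness facts — $f_t\in C(K)$, $\pi_{f_t},\,f_t\circ\beta_K\in C(IK)$, and $\beta_K(IK)\subset K$ — which guarantee that every evaluation written above makes sense; these are exactly the properties of the functor $I$ and of the idempotent barycenter map established in \cite{Zar}.
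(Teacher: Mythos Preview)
Your proof is correct and is exactly the ``routine checking'' the paper alludes to (the paper does not spell out a proof but merely states that the proposition ``can be proved by a routine checking''). The coordinatewise verification of the two algebra axioms via the identity $f_t\circ\beta_K=\pi_{f_t}$ is precisely the intended argument.
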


It is natural to ask whether  each $\I$-algebra has the above described form? More precisely, we have the following problem.

\begin{problem} Let $(X,\xi)$ be an $\I$-algebra. Is $(X,\xi)$ isomorphic to $(K,\beta_K)$ for some max-plus convex compactum
$K\subset  \R^T$?
\end{problem}

For a max-plus convex compactum $K\subset  \R^T$ we denote the family of $\I$-convex subset of $K$ by $\C_{\I}(K,\beta_K)$ and the
family of max-plus convex subsets by $\C_{\vee+}(K)$.

The proof of the following theorem is analogous to the proof of Theorem 3  from \cite{R2}.

\begin{theorem} Let $K$ be a max-plus convex compactum. Then  $\C_{\I}(K,\beta_K)=\C_{\vee+}(K)$.
\end{theorem}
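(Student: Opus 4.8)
The plan is to reduce everything to the identity $A^{+}=IA$ (under the identification of $IA$ with the subspace $Ii(IA)\subseteq IK$, where $i\colon A\hookrightarrow K$ is the inclusion), valid for every closed $A\subseteq K$, and then to finish with two short barycenter computations. Write $\chi_A\colon K\to K/A$ for the quotient map and $a=\chi_A(A)$, so that by definition $A^{+}=(I\chi_A)^{-1}(\eta(K/A)(a))$ and $\conv_{\I}(A)=\beta_K(A^{+})$. Since $\chi_A$ is a quotient map onto the compactum $K/A$, composition with $\chi_A$ identifies $C(K/A)$ with the set of those $g\in C(K)$ that are constant on $A$, and $\eta(K/A)(a)$ evaluates such a $\phi$ at its value at $a$; hence $\mu\in A^{+}$ if and only if $\mu(g)=c$ for every $g\in C(K)$ taking a constant value $c$ on $A$. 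The inclusion $IA\subseteq A^{+}$ is then immediate: if $\mu=Ii(\nu)$ with $\nu\in IA$ and $g|_A\equiv c$, then $\mu(g)=\nu(g\circ i)=\nu(c_A)=c$ by the normalization axioms for $\nu$.

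The reverse inclusion $A^{+}\subseteq IA$ is the technical core, and the step I expect to be the main obstacle. Given $\mu\in A^{+}$, put $f=sK(\mu)\in DK$, so that $\mu(\varphi)=\max_{x\in K}\bigl(f(x)+\varphi(x)\bigr)$ for all $\varphi\in C(K)$ by Proposition~\ref{maps}. I would first show that $f(x)=-\infty$ for every $x\notin A$: otherwise $f(x_0)>-\infty$ for some $x_0\notin A$, and since $K$ is normal, for each $M>0$ Urysohn's lemma yields $g\in C(K,[0,M])$ with $g|_A\equiv 0$ and $g(x_0)=M$; as $g$ is constant on $A$ we must have $\mu(g)=0$, while $\mu(g)\ge f(x_0)+M$, a contradiction for $M$ large. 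Thus $f|_A\in DA$ (it is upper semicontinuous and already attains the value $0$ on $A$), so $\nu:=nA(f|_A)\in IA$, and for every $\varphi\in C(K)$ we get $Ii(\nu)(\varphi)=\nu(\varphi|_A)=\max_{x\in A}\bigl(f(x)+\varphi(x)\bigr)=\max_{x\in K}\bigl(f(x)+\varphi(x)\bigr)=\mu(\varphi)$; hence $\mu=Ii(\nu)\in IA$. This establishes $A^{+}=IA$, so $\conv_{\I}(A)=\beta_K(IA)$, and since $\beta_K\circ Ii=\beta_A$ (a coordinatewise check from $\pr_t\circ i=\pr_t|_A$), in fact $\conv_{\I}(A)=\beta_A(IA)$.

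It remains to verify both inclusions of the theorem using this description, the case $A=\emptyset$ being trivial. If $A$ is a closed max-plus convex subset of $K$, then $A$ is itself a max-plus convex compactum, so $(A,\beta_A)$ is an $\I$-algebra and in particular $\beta_A(IA)\subseteq A$; combined with $A\subseteq\beta_K(IA)$ (as $\beta_K(\delta_x)=x$ for $x\in A$) this gives $\conv_{\I}(A)=A$, i.e. $A\in\C_{\I}(K,\beta_K)$. Conversely, if $A\in\C_{\I}(K,\beta_K)$, i.e. $\beta_K(IA)=A$, take $x_1,\dots,x_n\in A$ and $\lambda_1,\dots,\lambda_n\in[-\infty,0]$ with $\vee_{i=1}^{n}\lambda_i=0$; then $\nu=\max_{i=1}^{n}(\lambda_i+\delta_{x_i})\in IA$ and $\pr_t(\beta_K(\nu))=\nu(\pr_t|_K)=\max_{i}(\lambda_i+\pr_t(x_i))$ for every coordinate $t$, so $\beta_K(\nu)=\vee_{i=1}^{n}(\lambda_i+x_i)\in A$. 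By the criterion for max-plus convexity recalled just before the theorem this means $A\in\C_{\vee+}(K)$, which completes the proof.
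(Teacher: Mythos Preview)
Your argument is correct. The paper itself does not give a proof of this theorem; it only says the proof is analogous to Theorem~3 of \cite{R2}, so there is nothing detailed to compare against. The heart of your write-up --- the identity $A^{+}=IA$ --- is exactly the statement that the functor $I$ preserves preimages (in Shchepin's sense), specialized to the quotient $\chi_A\colon K\to K/A$ and the point $a$; this is the ingredient that \cite{R2} (and, behind it, \cite{Zar}) invokes abstractly, whereas you derive it directly from the density description $\mu=nK(sK(\mu))$ of Proposition~\ref{maps}. That is a perfectly good, self-contained route and fits the toolkit the present paper has built. Two minor presentational points: the line ``since $\beta_K\circ Ii=\beta_A$ \ldots\ in fact $\conv_{\I}(A)=\beta_A(IA)$'' is only meaningful once $A$ is assumed max-plus convex (otherwise $\beta_A$ need not take values in $A$), so it belongs inside the forward implication rather than before the case split; and the inclusion $\beta_A(IA)\subseteq A$ follows already from the definition of $\beta_A$ as a map into $A$, without appealing to the algebra axioms.
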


Let us remark that Theorem 3  from \cite{R2} established equality between the convexity generated by the monad $\A^\cdot$ and
convexity related to the maximum and the multiplication operation in $[0,1]^\tau$. The isomorphism of monads $\I$ and $\A^\cdot$
implies isomorphism of corresponding convexities. It is worth to notice that an isomorphism between max-plus convexity in
$[-\infty,+\infty)^n$ and max- $\cdot$ convexity in $[0,+\infty)^n$ for natural $n$ was mentioned in \cite{BCh}.

\end{document}